\newcommand{\R}{\mathbb{R}}
\newcommand{\Z}{\mathbb{Z}}
\newcommand{\N}{\mathbb{N}}
\newcommand{\C}{\mathbb{C}}
\newcommand{\A}{\mathcal{A}}
\newcommand{\TT}{\mathcal{T}}
\newcommand{\M}{\mathcal{M}}
\newcommand{\Iso}{\text{Iso}}
\newcommand{\inj}{\text{inj}}
\newcommand{\e}{\varepsilon}
\newcommand{\diag}{\text{diag}}
\newcommand{\Om}{\Omega}
\newcommand{\al}{\alpha}
\newcommand{\Int}{\text{Int}}
\newcommand{\skp}{\langle .,.\rangle}
\newcommand{\h}{h_{\text{top}}}
\newcommand{\hloc}{h_{\text{top,loc}}}
\DeclareMathOperator{\id}{id}
\newtheorem{definition}{Definition}[section]
\newtheorem{lemma}[definition]{Lemma}
\newtheorem{prop}[definition]{Proposition}
\newtheorem{theorem}[definition]{Theorem}
\newtheorem{cor}[definition]{Corollary}
\newtheorem{remark}[definition]{Remark}
\def\vol{{\rm vol }}
\def\inj{{\rm inj }}
\def\diam{{\rm diam\, }}
\begin{document}

\title{Topological entropy of minimal geodesics and volume growth on surfaces}

\author{ Gerhard Knieper, Carlos Ogouyandjou, Jan Philipp Schr\"oder
\\Fakult\"at f\"ur
Mathematik\\Ruhr--Universit\"at Bochum\\44780
 Bochum\\Germany\\}

\date{\today}

\maketitle

\begin{abstract}
Let $(M,g)$ be a compact Riemannian manifold of hyperbolic type,
i.e $M$ is a manifold admitting another metric of strictly
negative curvature. In this paper we study the geodesic flow
restricted to the set of geodesics which are
minimal on the universal covering.  In particular for surfaces we
show that the topological entropy of the minimal geodesics coincides with the volume
entropy of $(M,g)$ generalizing work of Freire and Ma\~n\'e.
\end{abstract}

\tableofcontents

\section{Introduction and main results}

Let $(M,g)$ be a compact Riemannian manifold (connected and $\partial M=\emptyset$) and $p:\tilde M\to M$ its universal
Riemannian covering, saving $\pi:TM\to M$ for the canonical projection. In \cite{Ma}, Manning introduced the volume
entropy (also called volume growth) $h(g)$ of $(M,g)$ defined by
$$ h(g):=\lim _{r \rightarrow +\infty }\frac{1}{r}\log \vol B(p,r),$$
where $p\in \tilde M$ and $B(p,r)$ denotes the open ball with center $p$ and
radius $r$. He proved that this limit exists and is independent of
$p$. Let $\h(\phi ^t)=\h(\phi^t_{SM})$ denote the topological entropy of the
geodesic flow $\phi ^t$ on the unit tangent bundle $SM$. Manning
proved the following estimate:
\[ \h(\phi^t_{SM}) \geq h(g) . \]
In the case of nonpositive curvature he showed that
equality holds. Subsequently this was generalized by Freire and
Ma\~n\'e \cite{FM} to  metrics without conjugate points. Let $\tilde \M$ be the closed and $\phi^t$-invarint subset of $S \tilde M$ consisting of all $v\in S\tilde M$ such that
the geodesic $ c_v $ with $\dot c_v(0) = v$ is
globally minimizing. We denote by $\M=Dp(\tilde \M)$ the projection of $\tilde \M$ to $SM$ and by $ \phi ^t_\M,\phi ^t_{\tilde \M}$ the geodesic flow restricted to $\M,\tilde \M$, respectively. In
\cite{KH} Katok and Hasselblatt stated the following theorem, saying that it is enough to consider minimal geodesics to generate exponential complexity (provided $h(g)>0$).

\begin{theorem}
Let $(M,g)$ be a compact Riemannian manifold and $ \phi^t_\M$ be the geodesic flow $\phi ^t$ restricted to the minimal geodesics $\M\subset SM$. Then
$$ 
\h(\phi ^t_\M) \geq h(g).
$$
\end{theorem}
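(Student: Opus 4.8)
The goal is to show $\h(\phi^t_\M) \geq h(g)$. Since $\M$ is the projection of globally minimizing geodesics, I want to exhibit, for each large $r$, a separated set in $\M$ whose cardinality grows like $\operatorname{vol} B(p,r)$. Let me think about this.

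The key idea: lift to $\tilde M$ and use that minimal geodesics realize distances. The volume entropy measures $\operatorname{vol} B(p,r)$. If I take a maximal $\e$-separated set $\{q_1, \dots, q_N\}$ in $B(p,r)$, then $N \geq \operatorname{vol} B(p,r) / \operatorname{vol} B(\cdot, 2\e)$ roughly (by a packing argument, using uniform bounds on ball volumes on the compact $M$ lifted). Actually $N \cdot \max_x \operatorname{vol} B(x,\e) \geq \operatorname{vol} B(p,r)$ since the $\e$-balls around a maximal $\e$-separated set cover. Wait—a maximal $\e$-separated set has its $\e$-balls covering. So $N \geq \operatorname{vol} B(p, r) / \sup_x \operatorname{vol}_{\tilde M} B(x, \e)$. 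The sup is finite and bounded by something depending on $\e$ and curvature bounds (Bishop-Gromov with lower Ricci bound). So $\frac{1}{r}\log N \to h(g)$ as $r \to \infty$ (for each fixed $\e$, liminf is $\geq h(g)$; actually we need to be careful, but $h(g) = \lim \frac1r \log \operatorname{vol} B(p,r)$ and the denominator is a constant, so $\liminf_r \frac1r \log N \geq h(g)$).

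For each $q_i$, take the minimizing geodesic $\gamma_i$ from $p$ to $q_i$, with initial vector $v_i = \dot\gamma_i(0) \in S_p\tilde M \subset \tilde\M$. Now I claim the $v_i$ form a Bowen-separated set for $\phi^t_{\tilde\M}$ with time $T = r$ — roughly. If two minimizing geodesics from the same point $p$ stay $\delta$-close in the Bowen metric up to time $r$, then their endpoints $q_i = \gamma_i(r)$... hmm wait, the geodesics have length $d(p, q_i)$ which may differ from $r$. Let me restrict to $q_i$ with $d(p,q_i) \in [r-1, r]$, say, using the annulus $B(p,r) \setminus B(p, r-1)$; but does that annulus carry the full volume growth? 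Yes: $\operatorname{vol} B(p,r) - \operatorname{vol} B(p,r-1)$ has the same exponential rate $h(g)$ when $h(g) > 0$ — actually this needs an argument, but for $h(g)>0$, $\limsup \frac1r \log(\operatorname{vol} B(p,r) - \operatorname{vol} B(p,r-1)) = h(g)$ follows since otherwise summing geometric-type series would bound $\operatorname{vol} B(p,r)$ by a smaller rate. (If $h(g) = 0$ the theorem is trivial.) Actually more carefully: if the annuli all had rate $< h(g) - \eta$ then $\operatorname{vol} B(p,r) = \sum_{k} \operatorname{vol}(\text{annulus}_k) $ would have rate $< h(g)-\eta$, contradiction. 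Good. So pick a maximal $\e$-separated set of size $N$ in the annulus, with $\frac1r \log N \to h(g)$ along a subsequence of radii (enough for the entropy bound).

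**The separation step.** Now the heart: if $v_i, v_j \in S_p\tilde M$ with $d_{\tilde M}(\phi^t v_i, \phi^t v_j) < \delta$ for all $t \in [0, r-1]$ (Bowen $(r-1,\delta)$-closeness in $S\tilde M$), and both geodesics are minimizing of length in $[r-1,r]$, I want to conclude $d_{\tilde M}(q_i, q_j) < \e$, forcing $i = j$. In particular at time $t = r-1$ the footpoints satisfy $d(\gamma_i(r-1), \gamma_j(r-1)) < \delta$, and then $d(q_i, q_j) \leq d(\gamma_i(r-1), \gamma_i(d(p,q_i))) + \delta + d(\gamma_j(r-1), \gamma_j(d(p,q_j))) \leq 1 + \delta + 1$ — that's not small enough. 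I need to be cleverer: I should place the separated points *at distance exactly $r$* (or use a different bookkeeping). Better: take $q_i$ on the sphere $S(p,r) = \partial B(p,r)$, i.e., an $\e$-separated set in the sphere of size $N$ with $\frac1r\log N \to h(g)$ — this holds because the annulus volume is the integral of sphere volumes (coarea-type, or just: a maximal $\e$-separated set in the annulus projects radially to an $O(\e)$-dense set in an intermediate sphere). Hmm, cleanest: use an $\e$-separated set $\{q_i\}$ in $S(p, r)$ directly, maximal; its $\e$-balls within the annulus $B(p,r+\e)\setminus B(p,r-\e)$ of width $2\e$ cover a definite fraction. Then each minimizing $\gamma_i: [0,r] \to \tilde M$, $\gamma_i(0) = p$, $\gamma_i(r) = q_i$, has unit speed and $|\gamma_i| = r$. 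Now if $v_i, v_j$ are $(r,\delta)$-Bowen-close then in particular $d(q_i, q_j) = d(\phi^r v_i \text{ footpoint}, \phi^r v_j \text{ footpoint}) < \delta$. Choosing $\delta < \e$ gives $i = j$. So $\{v_i\}$ is $(r,\delta)$-separated in $S\tilde M$.

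**Descending to $SM$ and finishing.** The set $\{v_i\} \subset S_p\tilde M \subset \tilde\M$ projects under $Dp$ to $\M$. Two issues: (a) is the projected set still $(T,\delta')$-separated downstairs? Since $p: \tilde M \to M$ is a local isometry and a Riemannian covering, for $\delta'$ smaller than a fraction of the injectivity radius of $M$, Bowen-$(T,\delta')$-closeness downstairs of $Dp(v_i), Dp(v_j)$ lifts to Bowen-$(T,\delta')$-closeness upstairs of $v_i$ and *some* deck translate of $v_j$; but all our $v_i$ emanate from the *same* point $p$, and for $T \geq $ diam$(M)$ the geodesics leave a fundamental domain, so a deck-translate coincidence would already force... hmm, this needs the standard argument that the covering projection does not decrease separation numbers by more than a bounded factor — in fact for geodesic flows, $s(T,\delta', \phi^t_M) \geq s(T, \delta, \phi^t_{\tilde M})$ restricted to vectors over a compact set, up to adjusting $\delta$; I would cite or reprove this (it is essentially that $p$ is a covering map so locally injective at scale $\inj(M)$, and distinct minimizing geodesics from a common point project to distinct orbit segments when separated). (b) $\tilde\M$ is $\phi^t$-invariant and closed, $\M = Dp(\tilde\M)$, and the $v_i \in \tilde\M$ since minimizing geodesics are, by definition, in $\tilde\M$. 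So $Dp(v_i) \in \M$.

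Putting it together: for suitable small $\delta'$, the separation number $s(r, \delta', \phi^t_\M) \geq c\cdot N_r$ where $\frac1r \log N_r \to h(g)$ along a sequence $r \to \infty$. Hence $h_{\delta'}(\phi^t_\M) := \limsup_r \frac1r \log s(r,\delta',\phi^t_\M) \geq h(g)$, and taking $\delta' \to 0$ gives $\h(\phi^t_\M) \geq h(g)$.

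**Main obstacle.** The delicate point is (a): controlling how the covering projection $Dp$ affects Bowen separation, i.e., ensuring the downstairs orbit segments of $Dp(v_i)$ really are pairwise far apart in the dynamical metric. Distinct geodesic segments upstairs can project to the *same* segment downstairs (if related by a deck transformation), but since all $v_i$ start at the *single* point $p$, two of them project to orbits that agree downstairs only if $\gamma_j = \alpha \circ \gamma_i$ for a deck transformation $\alpha$ with $\alpha(p) = p$, i.e. $\alpha = \id$ (deck group acts freely), so this cannot happen for $i \neq j$ — but I still must quantify that *near-*coincidence downstairs forces near-coincidence upstairs, which is where $\inj(M)$ enters and where a careful lemma is needed. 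The volume-growth bookkeeping (annuli vs. balls vs. spheres, and that they share the rate $h(g)$) is the other step requiring care but is standard.
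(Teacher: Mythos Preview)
Your argument has a genuine gap at step (b). You assert that the initial vectors $v_i=\dot\gamma_i(0)$ lie in $\tilde\M$ ``since minimizing geodesics are, by definition, in $\tilde\M$''. But $\tilde\M$ is the set of $v$ such that the \emph{entire} geodesic $c_v:\R\to\tilde M$ is globally minimizing, whereas your $\gamma_i$ is only a minimizing \emph{segment} from $p$ to $q_i$; its extension beyond $q_i$ (or behind $p$) need not be minimizing at all. So your separated set $\{v_i\}$ need not sit in $\tilde\M$, and its projection need not sit in $\M$. This is exactly the obstacle that makes the theorem nontrivial and that Manning's original argument for $\h(\phi^t_{SM})\geq h(g)$ does not already give.

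The paper's proof addresses precisely this point. It uses your separated sets $S_k\subset S_xM$ (initial vectors of minimizing segments of length $\approx T_k$) but then observes that for $t\in[\sqrt{T_k},\,T_k-\sqrt{T_k}]$ the shifted vectors $\phi^t S_k$ lie in the set $\M_k$ of vectors minimal on $[-\sqrt{T_k},\sqrt{T_k}]$, and $\bigcap_k\M_k=\M$. A product-separation lemma (if $s_T(A,2\delta)\leq\prod_i s_{t_i-t_{i-1}}(\phi^{t_{i-1}}A,\delta)$) is then applied twice: first to peel off the initial and final $\sqrt{T_k}$ stretches at a cost controlled by $\h(\phi^t_{SM})$, and second to chop the remaining interval into pieces of fixed length $T$ and find, by pigeonhole, one shift $\phi^{iT+\sqrt{T_k}}S_k\subset\M_k$ that is $(T,\delta/8)$-separated with cardinality $\gtrsim e^{h(1-2\e)T}$. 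Letting $k\to\infty$ (so $\M_k\searrow\M$) for fixed $T$ produces the required separated sets inside $\M$ itself. Your outline is missing this entire mechanism; without it the conclusion does not follow.
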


Following Klingenberg \cite{Kl} we call a compact manifold $M$ to be of hyperbolic type, if there exists
a metric of strictly negative curvature $g_0$ on $M$. 
We hope to prove an inequality of the kind $\h(\phi ^t_\M) \leq h(g)$, i.e. that equality holds in the above theorem. A first result in this direction is the following. We will introduce the notation $h_{top}( \phi ^t,F, \beta)$ and the notion of entropy expansiveness in section \ref{def h_top}.

\begin{theorem}
Let $(M,g)$ be a compact Riemannian manifold of hyperbolic type.
There is some constant $\beta$ depending only on $(M,g)$ such that for each compact set
$K \subset \tilde M$ we have
$$ h_{top}( \phi ^t, \pi^{-1}(K) \cap \tilde\M, \beta) \leq h(g). $$
\end{theorem}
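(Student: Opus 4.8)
\emph{Proof proposal.} The plan is to use the hyperbolic--type hypothesis to compare $\tilde g$--minimal geodesics with geodesics of an auxiliary negatively curved metric: the Morse (quasi--geodesic) lemma will show that a minimal geodesic segment is, up to a uniform error $\beta$, determined by its pair of endpoints, so that a $(T,\beta)$--separated family of minimal geodesics issuing from $K$ has pairwise $1$--separated endpoints in $\tilde M$; a volume estimate then bounds the number of such endpoints by $\vol B(p,T+O(1))$, which grows like $e^{h(g)T}$ by Manning's theorem.

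First, fix a metric $g_0$ on $M$ with sectional curvature $\le-\kappa^2<0$ (possible since $M$ is of hyperbolic type) and lift it to $\tilde g_0$ on $\tilde M$; by compactness of $M$ there is $a\ge1$ with $a^{-1}g_0\le g\le a g_0$, so any $\tilde g$--arclength parametrised $\tilde g$--minimizing curve is a quasi--geodesic of the Hadamard manifold $(\tilde M,\tilde g_0)$ with constants depending only on $a$. Since $(\tilde M,\tilde g_0)$ has pinched negative curvature it is Gromov hyperbolic with a uniform constant, so the Morse Lemma and the fellow--traveling property of quasi--geodesics yield a constant $L_0=L_0(M,g)$: whenever $v,w\in\tilde\M$ satisfy $d_{\tilde M}(c_v(0),c_w(0))\le1$ and $d_{\tilde M}(c_v(T),c_w(T))\le1$, then $d_{\tilde M}(c_v(t),c_w(t))\le L_0$ for all $t\in[0,T]$. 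Next, since $S\tilde M$ with the Sasaki metric has bounded geometry and bounded fibres, deck--invariance gives $\beta=\beta(M,g)>0$ with $d_{S\tilde M}(u,u')\le\beta$ whenever $u\in S_x\tilde M$, $u'\in S_y\tilde M$ and $d_{\tilde M}(x,y)\le L_0$. Combining the two, if $v,w\in\tilde\M$ have $1$--close footpoints and $1$--close endpoints $c_v(T),c_w(T)$, then $d_{S\tilde M}(\phi^tv,\phi^tw)\le\beta$ for all $t\in[0,T]$, i.e. $d_T(v,w)\le\beta$; equivalently, if $v,w\in\pi^{-1}(K)\cap\tilde\M$ are $(T,\beta)$--separated and have $1$--close footpoints then $d_{\tilde M}(c_v(T),c_w(T))>1$.

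Now fix $T$ and let $E\subset\pi^{-1}(K)\cap\tilde\M$ be a $(T,\beta)$--separated set of maximal cardinality (bounding $E$ also bounds any $(T,\beta)$--spanning set). Cover $K$ by $m=m(K)$ balls of radius $1/2$ and write $E=E_1\cup\dots\cup E_m$ according to the ball containing the footpoint, so that footpoints inside a single $E_j$ are $1$--close; by the previous paragraph the endpoints $\{c_v(T):v\in E_j\}$ then form a $1$--separated subset of $\tilde M$. Fixing $p\in K$ and writing $D=\diam_{\tilde g}K$, minimality of $c_v$ gives $d_{\tilde M}(p,c_v(T))\le D+T$, so these endpoints lie in $B(p,T+D)$; the disjoint balls of radius $1/2$ about them are contained in $B(p,T+D+\tfrac12)$ and each has volume $\ge v_0:=\inf_{x\in\tilde M}\vol B(x,\tfrac12)>0$. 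Hence
$$\card E\ \le\ m\, v_0^{-1}\,\vol B(p,\,T+D+\tfrac12).$$
Dividing by $T$, taking logarithms and letting $T\to\infty$, the constants $m$, $v_0$ and the bounded shift $D+\tfrac12$ of the radius drop out, while $\tfrac1T\log\vol B(p,T+D+\tfrac12)\to h(g)$ by Manning's theorem (the limit exists and is independent of the centre). Therefore $h_{top}(\phi^t,\pi^{-1}(K)\cap\tilde\M,\beta)=\limsup_{T\to\infty}\tfrac1T\log\card E\le h(g)$.

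The heart of the argument is the uniform shadowing of minimal geodesics by geodesics of the auxiliary negatively curved metric, which makes the ``endpoint map'' coarsely injective at a \emph{fixed} scale $\beta$ comparable to the Morse constant $L_0$. I expect the main technical care to go into making this shadowing, and the resulting fellow--traveling estimate at corresponding arclengths, uniform in $T$ and independent of the compact set $K$. Note that this argument intrinsically produces only some (possibly large) $\beta$; bringing $\beta$ down to $0$ --- which would upgrade the statement to $\h(\phi^t_\M)\le h(g)$ --- requires controlling the flow along minimal geodesics at arbitrarily small scales (for instance via an entropy--expansiveness statement), and is exactly the point not settled here.
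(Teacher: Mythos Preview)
Your argument is correct and follows essentially the same approach as the paper: both use the Morse lemma to show that minimal $g$-geodesics with nearby endpoints stay uniformly close along their whole length, and then convert this into a cardinality bound via a volume estimate for balls in $\tilde M$. The paper differs only cosmetically, constructing $(r,\beta)$-spanning sets explicitly from $\epsilon$-nets in $K$ and in an annulus $K_r$ and working with the metric $d_1(v,w)=\max_{t\in[0,1]}d(c_v(t),c_w(t))$ on $S\tilde M$, whereas you bound $(T,\beta)$-separated sets via an endpoint-packing argument and the Sasaki metric.
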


Using a result of Bowen \cite{Bo}, which we shall prove below in the non-compact setting, we obtain the following.

\begin{cor} \label{h=h(g) expansive}
Let $(M,g)$ be a compact Riemannian manifold of hyperbolic type. If $\phi ^t_{\tilde \M}$ is $\beta$-entropy-expansive for $\beta$ from the above theorem, we have
$$ \h(\phi ^t_\M) = h(g).$$
\end{cor}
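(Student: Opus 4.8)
The lower bound $\h(\phi^t_\M)\ge h(g)$ is precisely the Katok--Hasselblatt theorem stated above, so the whole content of the corollary is the reverse inequality $\h(\phi^t_\M)\le h(g)$, and the plan is to obtain it by feeding the preceding theorem into a version of Bowen's inequality adapted to the covering $p:\tilde M\to M$.

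First I would record the elementary transfer estimate between the compact quotient $\M$ and the non-compact lift $\tilde\M$. Fix a compact set $K\subset\tilde M$ with $p(K)=M$, for instance the closure of a fundamental domain, so that $\pi^{-1}(K)\cap\tilde\M$ is compact. Since $p$ is a Riemannian covering it is $1$-Lipschitz, and hence so is $Dp$ for the corresponding Bowen (dynamical) metrics; thus $d^{\tilde M}_T(\tilde v,\tilde w)\ge d^{M}_T(Dp\,\tilde v,Dp\,\tilde w)$ for all lifts. Consequently, lifting each point of a $(T,\beta)$-separated subset of $\M$ to the unique lift whose base point lies in $K$ produces a $(T,\beta)$-separated subset of $\pi^{-1}(K)\cap\tilde\M$. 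At every scale this yields
\[ h_{top}(\phi^t_\M,\M,\beta)\le h_{top}(\phi^t,\pi^{-1}(K)\cap\tilde\M,\beta), \]
and the same comparison holds with $\beta$ replaced by the genuine topological entropy. Combined with the previous theorem we get $h_{top}(\phi^t_\M,\M,\beta)\le h(g)$.

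It remains to see that $\h(\phi^t_\M)=h_{top}(\phi^t_\M,\M,\beta)$, which is where Bowen's theorem enters. Bowen's inequality bounds the topological entropy of a flow by its scale-$\beta$ entropy plus the supremum of the entropies carried by the infinite dynamical balls $\Phi_\beta(v)=\{w:\dist(\phi^tv,\phi^tw)\le\beta\ \forall t\}$, and $\beta$-entropy-expansiveness says this last term vanishes. The stated hypothesis is for $\phi^t_{\tilde\M}$, and I would pass it down: for $v\in\M$ with a lift $\tilde v\in\tilde\M$, every $w\in\Phi_\beta(v)$ has a lift $\tilde w\in\Phi_\beta(\tilde v)\subset\tilde\M$ (path-lift the orbit $t\mapsto\phi^tw$ starting at the lift of $w$ near $\tilde v$; it cannot change sheets while remaining $\beta$-close to the orbit of $\tilde v$), so $h_{top}(\phi^t_\M|_{\Phi_\beta(v)})\le h_{top}(\phi^t_{\tilde\M}|_{\Phi_\beta(\tilde v)})=0$; hence $\phi^t_\M$ is $\beta$-entropy-expansive as well. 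Bowen's theorem on the compact space $\M$ then gives $\h(\phi^t_\M)=h_{top}(\phi^t_\M,\M,\beta)$, and chaining the inequalities yields $h(g)\le\h(\phi^t_\M)=h_{top}(\phi^t_\M,\M,\beta)\le h(g)$.

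The Lipschitz/lifting bookkeeping and the precise form of Bowen's inequality are routine. The main obstacle is the one the paper flags just before the corollary: establishing Bowen's theorem in the present non-compact, $\Gamma$-equivariant setting, i.e.\ making sure the identity ``topological entropy $=$ scale-$\beta$ entropy'' survives on $\tilde\M$ (or transfers to $\M$ as above) — in particular that the infinite dynamical $\beta$-balls, which may fail to be compact upstairs, genuinely carry zero entropy under the stated hypothesis, and that this is compatible with the fixed geometric constant $\beta$ supplied by the previous theorem rather than with an arbitrarily small one.
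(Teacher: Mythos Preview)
Your first transfer---the Lipschitz comparison $\h(\phi^t_\M,\M,\beta)\le\h(\phi^t,\pi^{-1}(K)\cap\tilde\M,\beta)$---is fine, but the second one, passing $\beta$-entropy-expansiveness from $\tilde\M$ down to $\M$, fails for the $\beta$ at hand. That constant is at least $5r_0$ (five times the Morse constant) and will typically exceed the injectivity radius, even the diameter, of $M$. Once $\beta\ge\diam M$ one has $Z_\beta(v)=\M$ for every $v\in\M$, so $\hloc(\phi^t_\M,\beta)=\h(\phi^t_\M)>0$ whenever $h(g)>0$; thus $\phi^t_\M$ is \emph{never} $\beta$-entropy-expansive in this regime, irrespective of what happens on $\tilde\M$. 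Your path-lifting sentence (``it cannot change sheets while remaining $\beta$-close'') is precisely where this breaks: that claim is only valid when $2\beta$ is below the systole of $M$; for large $\beta$ an orbit can stay $\beta$-close to $c_v$ in $M$ while every lift drifts arbitrarily far from $c_{\tilde v}$ in $\tilde M$.

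The paper sidesteps this transfer entirely. It proves a covering version of Bowen's inequality (Theorem~\ref{bowen expansive}): for compact $K\subset\tilde V$ with $p(K)=V$,
\[ \h(f)\ \le\ \h(\tilde f,K,\beta)\ +\ \hloc(\tilde f,\beta), \]
with both right-hand terms computed \emph{upstairs}. Applied with $\tilde V=\tilde\M$, $V=\M$, this combines the preceding theorem and the hypothesis $\hloc(\phi^t_{\tilde\M},\beta)=0$ directly into $\h(\phi^t_\M)\le h(g)$, with no appeal to $Z_\beta$-balls in $\M$. The corollary's hypothesis is deliberately phrased on $\tilde\M$ because the downstairs analogue is false in the relevant range of $\beta$.
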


Presently we do not know if $\phi^t_{\tilde \M}$ for Riemannian manifolds $(M,g)$ of hyperbolic type of arbitrary dimension is $\beta$-entropy-expansive. We shall prove, however, that in the two-dimensional case, $\beta$-entropy-expansiveness holds in the non-wandering set of $\M$. This gives the following result.

\begin{theorem}
Let $(M,g)$ be closed Riemannian surface. Then
$$ \h( \phi ^t_\M) = h(g). $$
\end{theorem}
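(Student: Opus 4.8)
The plan is to reduce the final theorem (the surface case) to Corollary~\ref{h=h(g) expansive}, i.e.\ to show that on a closed Riemannian surface the flow $\phi^t_{\tilde\M}$ is effectively $\beta$-entropy-expansive in the sense needed for Bowen's argument. By the second theorem we already have, for every compact $K\subset\tilde M$, the inequality $h_{top}(\phi^t,\pi^{-1}(K)\cap\tilde\M,\beta)\le h(g)$, and by the first theorem $\h(\phi^t_\M)\ge h(g)$; so it suffices to produce the reverse inequality $\h(\phi^t_\M)\le h(g)$. The non-compact Bowen-type result alluded to before the corollary says that if the flow on $\tilde\M$ is $\beta$-entropy-expansive, then topological entropy can be computed from $(n,\beta)$-spanning/separated sets up to the $\beta$-scale, which together with the second theorem forces equality. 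So the whole game in dimension two is: \emph{verify $\beta$-entropy-expansiveness}, at least on the part of $\M$ that carries the entropy.

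First I would recall that a flow is $\beta$-entropy-expansive if for every $v$ the ``bi-infinite $\beta$-dynamical ball'' $\Gamma_\beta(v)=\{w: d(\phi^t v,\phi^t w)\le\beta\ \forall t\in\R\}$ carries zero topological entropy for the time-one map (equivalently, $\hloc=0$ at scale $\beta$). On a surface I would analyze such a ball intersected with $\tilde\M$: if $w\in\Gamma_\beta(v)\cap\tilde\M$, then $c_v$ and $c_w$ are both globally minimizing geodesics in $\tilde M$ that stay within bounded Hausdorff distance of each other. The key two-dimensional input is the structure theory of minimal geodesics on surfaces (Morse, Hedlund, and for the hyperbolic-type/negative-curvature background metric the results of Klingenberg and the boundary-at-infinity picture): two globally minimizing geodesics on the universal cover of a surface that remain a bounded distance apart for all time share the same pair of endpoints on the ideal boundary $\partial_\infty\tilde M$ (using the $g_0$-hyperbolic structure to make ``ideal boundary'' precise and to get the Morse Lemma / fellow-traveler estimates). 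In a surface, the set of minimal geodesics with a fixed pair of ideal endpoints is essentially one-dimensional — it is swept out by a band of parallel-like minimal geodesics, and the flow on it is a reparametrization of a one-dimensional (interval- or circle-like) system, hence has zero topological entropy. This is where the restriction to surfaces is essential: in higher dimension such a ``flat strip'' of minimal geodesics can be genuinely higher-dimensional and need not have zero entropy, which is exactly why the corollary's hypothesis cannot yet be removed in general.

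The precise steps I would carry out: (1) Fix the auxiliary metric $g_0$ of negative curvature and transport the ideal boundary, Morse Lemma and quasi-geodesic stability to $(\tilde M,g)$ via the identity map, which is a quasi-isometry since $M$ is compact; conclude that any $w\in\Gamma_\beta(v)\cap\tilde\M$ has $c_w(\pm\infty)=c_v(\pm\infty)$. (2) On a surface, show the set $\M(\xi,\eta)$ of (unit tangent vectors to) globally minimizing geodesics joining two fixed ideal points $\xi\ne\eta$ is, after projecting out the flow direction, a totally ordered ``stack'' (this uses that on a surface two distinct minimal geodesics with the same endpoints bound a flat-strip-like region and are disjoint or coincide — a planar/Jordan-curve argument on $\tilde M\cong\R^2$). (3) Deduce that $\phi^t$ restricted to $\Gamma_\beta(v)\cap\tilde\M$ is conjugate (up to time change) to a flow on a compact subset of $\R$ or $S^1$, hence has zero topological entropy; therefore $\phi^t_{\tilde\M}$ is $\beta$-entropy-expansive \emph{on the non-wandering set of $\M$}. (4) Observe that topological entropy of $\phi^t_\M$ is concentrated on the non-wandering set (by the variational principle, all invariant measures are supported there), so the entropy-expansiveness on the non-wandering set suffices to run the non-compact Bowen argument and obtain $\h(\phi^t_\M)=h(\phi^t_{\Om(\M)})\le h(g)$. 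Combining with Theorem~1 gives equality.

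The main obstacle I expect is step (2)–(3): making rigorous that on a surface a bounded family of minimal geodesics with common endpoints genuinely has one-dimensional (zero-entropy) dynamics. One has to rule out that within the $\beta$-tube the minimal geodesics do something complicated — e.g.\ that the ``width function'' of the flat strip behaves wildly, or that non-proper accumulation of minimal geodesics creates positive local entropy. The clean way is to invoke the total order on minimal geodesics with fixed endpoints (no two of them cross transversally, by minimality and the Jordan curve theorem in $\tilde M\approx\R^2$), which pins the local dynamics to that of a monotone family; the zero-entropy conclusion then follows from the fact that an entropy-expansive-type dynamical ball which is totally ordered and flow-invariant supports only the trivial dynamics in the transverse direction. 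A secondary technical point is the non-compactness of $\tilde\M$ and $S\tilde M$: one must check that Bowen's theorem relating entropy and entropy-expansiveness survives when stated for the $\Gamma$-invariant, co-compact situation, which is precisely the ``non-compact Bowen'' lemma promised in the excerpt; I would state and prove it for a flow that is the lift of a flow on a compact quotient and apply it to $\phi^t_{\tilde\M}$ over $\phi^t_\M$.
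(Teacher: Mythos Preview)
Your overall strategy coincides with the paper's: combine the lower bound (Theorem~1.1), the $\beta$-scale upper bound (Theorem~1.2), the non-compact Bowen inequality (Theorem~2.2), and concentration of entropy on the non-wandering set $\Omega$, reducing everything to showing that the local entropy vanishes on $\tilde\Omega$. Step~(4) is exactly how the paper finishes.

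The genuine gap is in step~(2). It is \emph{not} true that any two minimal $g$-geodesics in $\tilde M$ with the same ideal endpoints $\xi_\pm$ are disjoint or coincide; they may cross transversally (once), so $\tilde\M_\xi$ is not a totally ordered stack and the planar/Jordan-curve argument you sketch does not give the graph property on all of $\tilde\M_\xi$. The paper handles this by singling out subsets $\tilde\M_\xi^+,\tilde\M_\xi^-\subset\tilde\M_\xi$ (Definition~4.1) on each of which the graph property does hold, and then proving the key Proposition~4.3: every non-wandering minimal lies in $\tilde\M_\xi^0=\tilde\M_\xi^+\cup\tilde\M_\xi^-$. That proposition is the substantive two-dimensional input your outline is missing; its proof uses Morse's structural theorems on class~A geodesics (asymptotic minimals do not cross, behavior near periodic minimals, etc.) and is not just a flat-strip observation. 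Once $\Omega\subset\M^0$ is known, zero local entropy follows---though the paper obtains it via a direct volume count showing $(T,\delta)$-spanning sets for $Z_\beta(v)\cap\tilde\M_\xi^\pm$ grow at most linearly in $T$, rather than through the conjugacy to a one-dimensional flow you propose in step~(3).
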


This paper is organized as follows. In the second section we study topological entropy and local topological entropy
for homeomorphisms of metric spaces and following the ideas of Bowen \cite{Bo} we provide an estimate for the topological entropy.
In section 3, we give a complete proof using the ideas provided by Katok and Hasselblatt
that the topological entropy of the minimal geodesics is bounded
below by the volume growth (theorem 1.1). Moreover, we study
topological entropy of minimal geodesics on manifolds of
hyperbolic type and give the proof of theorem 1.2. Finally, in section 4 we show that for surfaces the topological entropy of $\phi^t_\M$ equals the
volume growth of $g$ (theorem 1.4).

\section{Topological Entropy for homeomorphisms of metric spaces}

In this section we study discrete dynamical systems. In order to apply our results to geodesic flows $\phi^t, t\in \R,$ observe that the topological entropy of $\phi^t$ defined in the continuous setting coincides with that of the discrete system $\phi^n, n\in\Z$, cf. \cite{KH}.

\subsection{Bowen's definition}\label{def h_top}

Here we recall Bowen's definition of topological entropy. Let $f : V \to V$ be a homeomorphism of a not necessarily compact metric space $(V,d)$. For each $n\in \N$, a 
 metric on $V$ is defined by
$$
d_n(x,y) := \max_{0 \le j <n} d(f^j(x), f^j(y)) .
$$
Let $F$ be a subset of  $V$. We say that a set $Y\subset V$ is $(n,\e)$-spanning for $F$ if the closed balls 
$\bar B_n(y,\e) = \{y\in V: d_n(x, y) \le \e \}, y\in Y$ cover $F$.
If $Y \subset F$ and $\bar B_n(y,\e)\cap Y = \{y\}$ for all $y\in Y$, we say that $Y$ is an $(n,\e)$-separated subset of $F$.

Let $r_n(F,\e)$ denote the minimal cardinality of $(n,\e)$-spanning sets for $F$ and let $s_n(F,\e)$ denote the maximal cardinality
of $(n,\e)$-separated subsets of $F$. It is easy
to see that for any $\e > 0$ we have
$$
r_n(F,\e ) \leq
s_n(F,\e ) \leq r_n(F,\e /2) .
$$
Note that $r_n(F,\e)<\infty$, if $F$ is compact.

We define the following notions of topological entropy.
\begin{align*}
 \h(f,F, \e) &:= \varlimsup_{n\to+\infty}\frac{1}{n}\log r_n(F,\e), \\
 \h(f,F) &:=\lim_{\e\to 0}\h (f,F, \e) , \\
 \h(f) &:= \sup_{ F \subset V \text{ compact}} \h(f, F).
\end{align*}
Note that for any $\e>0$ we have $\h(f,F, \e) \leq \h(f,F)$ and if $V$ is itself compact, we get $\h(f)=\h(f,V)$. If we use $s_n(F,\e)$ instead of $r_n(F,\e)$, we obtain the same value for $\h(f,F)$. For details on topological entropy we refer to \cite{W}.

We need the following less known
concept of local entropy introduced by Bowen \cite{Bo}.
 For $x\in V$ and $\beta >0$ set
$$Z_\beta(x):=\{y\in V : d(f^n(x), f^n(y))\leq \beta   \; \forall n\in \mathbb{Z}\}.$$
Then we call 
$$
\hloc(f,\beta):=\sup_{x\in V}\h(f, Z_\beta (x))$$
the $\beta$-local entropy of $f$.
We say that $f$ is $\beta$-entropy-expansive for $\beta>0$ if
$$\hloc(f,\beta) =0.$$

\subsection{An upper bound for the topological entropy of homeomorphisms}\label{bowen bounds}

In order to make use of the local entropy it will be important to compute entropy on coverings.
We consider the following setting. 
Let $(\tilde V, \tilde d)$ be a metric space and $\Gamma$ a subgroup of isometries of $\tilde V$ acting
on $\tilde V$. Assume that the quotient $V := \tilde V/\Gamma$ is compact and equipped
with a metric $d$ such that the projection $p: \tilde V \to V$ is a local isometry. Let $ \tilde f: \tilde V \to \tilde V$ be a homeomorphism
which commutes with the group $\Gamma$ and let $f: V \to V$ be the projection defined by $f(x)=p\tilde f p^{-1}(x)$ (this is well-defined since $\tilde f,\Gamma$ commute). $f$ is a homemorphism as well. Recall the following result.

\begin{prop}[theorem 8.12 in \cite{W}] \label{walters}
For each compact set $K \subset  \tilde V$ we have
$$
h_{top}(\tilde f, K) = h_{top}( f, p(K))
$$
In particular, if $p(K)=V$, then
\[ h_{top}(\tilde f, K) = h_{top}( f). \]
\end{prop}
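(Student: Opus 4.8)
The plan is to establish the two assertions in turn, deducing the second from the first together with the surjectivity hypothesis. The main point is that the covering map $p:\tilde V\to V$ being a local isometry lets us compare the Bowen metrics $\tilde d_n$ on $\tilde V$ and $d_n$ on $V$ for small scales. Concretely, since $V$ is compact, there is a Lebesgue number $\delta_0>0$ such that any $d$-ball of radius $\le \delta_0$ in $V$ is evenly covered and $p$ restricts to an isometry on each component of its preimage; equivalently, for $\e\le\delta_0$ and any $x\in\tilde V$, the restriction $p|_{\bar B_{\e}(x)}$ is an isometry onto $\bar B_\e(p(x))$. Because $\tilde f$ commutes with $\Gamma$ and projects to $f$, we have $p\circ\tilde f^{\,j}=f^{\,j}\circ p$ for all $j$, so $\tilde d(\tilde f^{\,j}(x),\tilde f^{\,j}(y))\ge d(f^{\,j}(px),f^{\,j}(py))$ always, and we get equality as long as the two points stay $\tilde d$-close along the orbit. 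This is the technical heart: I would prove that for $\e\le\delta_0/2$ (say) and $x,y\in\tilde V$ with $\tilde d_n(x,y)\le\e$ one has $\tilde d_n(x,y)=d_n(px,py)$, by induction on $j$ using that the local isometry property lets us lift the short path in $V$ realizing $d(f^jpx,f^jpy)$ back to $\tilde V$.

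With this comparison in hand, the inequality $h_{top}(\tilde f,K)\le h_{top}(f,p(K))$ is straightforward: if $Y\subset p(K)$ is $(n,\e)$-spanning for $p(K)$ with $\e\le\delta_0$, then for each $y\in Y$ pick one preimage in $\tilde V$ and enlarge the resulting finite set by the finitely many $\Gamma$-translates needed to cover the compact set $K$ — here compactness of $K$ is essential, since $K$ meets only finitely many fundamental domains, so only finitely many group elements are relevant. The resulting set is $(n,2\e)$-spanning for $K$ in $\tilde V$, giving $r_n^{\tilde V}(K,2\e)\le C\, r_n^{V}(p(K),\e)$ with $C$ independent of $n$; taking $\tfrac1n\log$, letting $n\to\infty$ and then $\e\to0$ yields $h_{top}(\tilde f,K)\le h_{top}(f,p(K))$. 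For the reverse inequality, start from an $(n,\e)$-separated subset $Z$ of $p(K)$ of maximal cardinality with $\e\le\delta_0$; lift each point of $Z$ to a point of $K$ (possible after translating, since $p(K)=p$ of the compact set, hmm — more carefully, choose preimages lying in a fixed compact set whose image is $p(K)$, which exists since $p$ is a covering and $p(K)$ is compact). By the equality $\tilde d_n=d_n$ at small scales, distinct lifts are still $(n,\e)$-separated in $\tilde V$, so $s_n^{\tilde V}(K',\e)\ge s_n^{V}(p(K),\e)$ for a suitable compact $K'\subset\tilde V$ with $p(K')=p(K)$; since $h_{top}$ computed with separated sets agrees with the spanning-set definition, and enlarging the compact set only increases entropy, we obtain $h_{top}(f,p(K))\le h_{top}(\tilde f,K')$. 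To close the loop one uses that entropy does not depend on which compact preimage of $p(K)$ is chosen — any two such sets $K,K'$ have $\Gamma K=\Gamma K'$ up to finitely many translates, and each translate has the same entropy since $\tilde f$ commutes with $\Gamma$ acting by isometries. This gives $h_{top}(\tilde f,K)=h_{top}(f,p(K))$.

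The ``in particular'' clause then follows immediately: if $p(K)=V$, the definition $h_{top}(f)=\sup\{h_{top}(f,F):F\subset V\text{ compact}\}$ collapses to $h_{top}(f,V)$ because $V$ is compact, so $h_{top}(\tilde f,K)=h_{top}(f,V)=h_{top}(f)$.

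I expect the main obstacle to be the bookkeeping in the lifting arguments, specifically ensuring the lifted spanning/separated sets are of comparable size uniformly in $n$ and verifying carefully that the Bowen metrics genuinely coincide (not merely are comparable up to a constant) at scales below the Lebesgue number — the latter is what makes the separated-set lift work without loss. A secondary subtlety is the choice of a fixed compact fundamental-domain-like set $K'\subset\tilde V$ with $p(K')=p(K)$ into which to lift points of $p(K)$; this is where compactness of $V$ (hence properness of the $\Gamma$-action) and compactness of $p(K)$ are used together. None of these steps is deep, but they must be assembled in the right order.
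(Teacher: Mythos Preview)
The paper does not supply its own proof of this proposition; it is quoted verbatim as theorem~8.12 from Walters' textbook and used as a black box. Your outline is essentially the standard argument found there and is correct in substance.

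Two simplifications are worth noting. For the inequality $h_{top}(f,p(K))\le h_{top}(\tilde f,K)$ you do not need an auxiliary compact set $K'$, nor the small-scale equality of Bowen metrics: every $z\in Z\subset p(K)$ has by definition of $p(K)$ a preimage already in $K$, and since $p$ is $1$-Lipschitz one has $\tilde d_n(x,y)\ge d_n(px,py)$ unconditionally, so lifts into $K$ of an $(n,\e)$-separated subset of $p(K)$ are automatically $(n,\e)$-separated in $K$. The detour through $K'$ and the closing argument about $\Gamma$-translates is unnecessary. The harder direction $h_{top}(\tilde f,K)\le h_{top}(f,p(K))$ does require the inductive lifting of Bowen balls you sketch; the one point to make explicit there is that the induction step (showing that the lift $\tilde y$ of $y$ near $x$ stays in the correct sheet at time $j+1$) uses uniform continuity of $\tilde f$, which is available because $\tilde f$ is $\Gamma$-equivariant and $\tilde V/\Gamma$ is compact.
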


We shall prove the following theorem which is a slight extension of a result of Bowen
(see \cite{Bo}). It allows to estimate the topological entropy using coverings and will be crucial for our applications.

\begin{theorem}\label{bowen expansive}
Let $K \subset \tilde V$ be a compact set such that $p(K) = V$. Then for any
$\beta >0$ we have
$$
\h( f) \le \h( \tilde f, K, \beta) + \hloc( \tilde f,  \beta).
$$
\end{theorem}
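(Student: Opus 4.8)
The plan is to count $(n,\e)$-separated sets in $V$ by lifting them to $\tilde V$, splitting the lift into a "$\beta$-tube" part that is controlled by $\hloc(\tilde f,\beta)$ and a "coarse" part controlled by $\h(\tilde f, K,\beta)$. Fix $\e>0$ small and $\beta>0$. Let $E\subset V$ be an $(n,\e)$-separated set of maximal cardinality $s_n(V,\e)$. Since $p(K)=V$ and $p$ is a local isometry, for each $x\in E$ choose a lift $\hat x\in K$. The key geometric observation is the following dichotomy for two lifts $\hat x,\hat y$ of distinct points $x,y\in E$: either $\tilde d_n(\hat x,\hat y)$ is "large" (bounded below in terms of $\e$, so $\hat x,\hat y$ are $(n,\e/2)$-separated in $\tilde V$ after shrinking), or $\hat y$ lies in the $\beta$-tube $Z_\beta(\tilde f^j \hat x)$ for some $0\le j<n$ along the orbit — more precisely, there is $\gamma\in\Gamma$ with $\gamma\hat y$ staying $\beta$-close to the $\tilde f$-orbit of $\hat x$ over the whole window $[0,n)$. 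I would first make this dichotomy precise, choosing $\beta$ in relation to $\e$ (say $\e < \beta$) so that the triangle inequality in the local isometry forces one of the two alternatives.

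Next I would organize the counting. Partition $E$ according to which lift-translate behavior occurs; the "coarse" points form an $(n, \e/2)$-separated subset of $K' := \bigcup_{\gamma\in\Gamma}\gamma K$ that projects injectively, but since $K$ is a fundamental-domain-like compact set one reduces this to counting an $(n,\beta)$-spanning/separated set relative to $K$ itself, giving a factor growing like $e^{n\h(\tilde f, K,\beta)}$ (up to the $\e\to0$ limit, using $s_n(K,\e/2)\le r_n(K,\e/4)$ and monotonicity to pass from $\e/2$ down to $\beta$ — here one uses that $\h(\tilde f, K, \cdot)$ is monotone and we are free to take $\e$ as small as we like since the final entropy is a limit as $\e\to0$, while $\beta$ is fixed; the clean way is to bound $r_n(K,\e)$ by $r_n(K,\beta)$ times a bounded multiplicative constant coming from compactness, independent of $n$). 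For the "local" points, all of them have lifts lying within $\beta$ in the $\tilde d_n$-metric of a single orbit segment, hence within the set $Z_\beta(\hat x)$ for a fixed base point, and being $(n,\e)$-separated there their number is at most $s_n(Z_\beta(\hat x),\e)\le e^{n(\h(\tilde f, Z_\beta(\hat x))+o(1))}\le e^{n(\hloc(\tilde f,\beta)+o(1))}$. Multiplying the two bounds and taking $\tfrac1n\log$, then $\varlimsup_{n\to\infty}$, then $\e\to0$, and finally invoking Proposition~\ref{walters} together with $\h(f)=\h(f,V)$ (as $V$ is compact), yields $\h(f)\le \h(\tilde f,K,\beta)+\hloc(\tilde f,\beta)$.

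The main obstacle I anticipate is making the covering/counting bookkeeping genuinely uniform in $n$: the group $\Gamma$ may be infinite, so one must check that only boundedly many translates $\gamma K$ are relevant over each finite window $[0,n)$ in a way that does not blow up with $n$ — essentially because $K$ is compact and $\tilde f$ moves it by a bounded amount per step is \emph{not} true in general, so instead the argument must localize: each separated point is handled by \emph{one} translate, and the separation in $\tilde d_n$ prevents double-counting, so no global finiteness of $\Gamma$-orbits is needed, only the per-point dichotomy. The second delicate point is the interplay of the two scales $\e$ and $\beta$: since $\h(\tilde f, K,\beta)$ appears with the fixed scale $\beta$ (not in a limit), one cannot simply take $\e\to0$ inside it; the resolution is to first prove the estimate $s_n(V,\e)\le C(\beta)\cdot r_n(K,\beta)\cdot s_n(Z_\beta(\text{pt}),\e)$ with $C(\beta)$ independent of $n$ and $\e$, and only afterwards send $\e\to0$, which kills $C(\beta)^{1/n}\to 1$ and leaves exactly the claimed inequality. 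I would also double-check the edge case where $\hloc(\tilde f,\beta)$ or $\h(\tilde f,K,\beta)$ is infinite, in which case the inequality is vacuous.
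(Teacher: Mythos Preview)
Your overall architecture --- cover $K$ at scale $\beta$, then refine each $\beta$-ball at scale $\e$, multiply the two counts, take logarithms and limits, and finish with Proposition~\ref{walters} --- is exactly the route the paper takes. However, there is a genuine gap at the step where you pass from ``points within $\beta$ in the $\tilde d_n$-metric of a single orbit segment'' to ``points within the set $Z_\beta(\hat x)$''. These are \emph{not} the same thing: the former is the finite-time dynamical ball
\[
\bar B_n(\hat x,\beta)=\{y:\tilde d(\tilde f^j\hat x,\tilde f^j y)\le\beta\text{ for }0\le j<n\},
\]
whereas $Z_\beta(\hat x)=\{y:\tilde d(\tilde f^j\hat x,\tilde f^j y)\le\beta\text{ for all }j\in\Z\}$ is the two-sided infinite tube. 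The set $\bar B_n(\hat x,\beta)$ is typically much larger than $Z_\beta(\hat x)$, and the bound $s_n(Z_\beta(\hat x),\e)\le e^{n(\hloc(\tilde f,\beta)+o(1))}$ gives you no control over $s_n(\bar B_n(\hat x,\beta),\e)$. Your target estimate
\[
s_n(V,\e)\le C(\beta)\cdot r_n(K,\beta)\cdot s_n(Z_\beta(\text{pt}),\e)
\]
is therefore not justified (and is false in general without further argument).

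Bridging this gap --- i.e.\ proving that $r_n(\bar B_n(x,\beta),\delta)\le c\,e^{(a+\e)n}$ uniformly in $x\in K$ and $n$, where $a=\hloc(\tilde f,\beta)$ --- is precisely the content of the paper's Lemma~\ref{lemma bowen}, and it is the real work in Bowen's proof. The idea there is quite different from your dichotomy: one chooses finitely many reference points $y_1,\dots,y_s\in K$ with spanning sets $E(y_i)$ for $Z_\beta(y_i)$ of controlled size, thickens $Z_\beta(y_i)$ to open sets $U(y_i)$ still covered by $E(y_i)$, and then shows that for each $t$ in a suitable range the image $\tilde f^t(\bar B_n(x,\beta))$ lands in some $\gamma U(y_i)$. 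This lets one chain together spanning sets along the orbit via Lemma~\ref{lemmabowen1}, producing the uniform exponential bound. Note also that the uniformity in $x$ is essential: the definition of $\hloc$ only gives, for each $x$ separately, an $m(x)$ beyond which the exponential rate is close to $a$; making this uniform requires the compactness-of-$K$-modulo-$\Gamma$ argument that your plan does not contain. Once Lemma~\ref{lemma bowen} is in hand, the remainder of your outline (multiply, take $\varlimsup$, let $\e\to 0$, apply Proposition~\ref{walters}) is correct and matches the paper.
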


The proof of \ref{bowen expansive} rests of the following estimate.

\begin{lemma} \label{lemma bowen}
Let $a = \hloc( \tilde f, \beta)$. For any $\e >0, \delta>0, \beta>0$ there exists a constant $c>0$, s.th.
\[ r_n\left(\bar B_n(x,\beta) ,\delta \right) \leq c e^{(a+\e)n} \quad \forall ~ x\in K, n\in\N. \]
\end{lemma}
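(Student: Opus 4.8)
The plan is to prove Lemma \ref{lemma bowen} by a covering argument, passing from a ``global'' cover of $K$ to local estimates on the dynamical balls $\bar B_n(x,\beta)$. Since $a = \hloc(\tilde f,\beta) = \sup_{z} \h(\tilde f, Z_\beta(z))$, for each point $z$ the set $Z_\beta(z)$ has entropy at most $a$, hence for the fixed $\e$ and $\delta$ there is an $N_z$ with $r_m(Z_\beta(z),\delta/2) \le e^{(a+\e)m}$ for all $m \ge N_z$. The first step is to make this uniform: I would fix a large $N$ and, using compactness of $K$ (and the fact that all the relevant dynamical balls live in a fixed compact neighbourhood, by continuity of $\tilde f$), extract a single $N$ and a single constant $c_0$ so that $r_N(\bar B_N(x,\beta),\delta) \le c_0\, e^{(a+\e)N}$ for every $x \in K$. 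The subtle point here is that $\bar B_N(x,\beta)$ is not literally one of the sets $Z_\beta(z)$; one only controls the orbit for $0 \le j < N$, not for all $n \in \Z$. The standard device of Bowen is to observe that the nested intersection $\bigcap_{k \ge 0} \bar B_{m_k}(x_k,\beta)$ along a convergent sequence is contained in some $Z_\beta(z)$, so a compactness/contradiction argument yields the uniform bound at a single scale $N$; this is where I expect the main work to lie.

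Once the single-scale estimate is in hand, the second step is a sub-multiplicativity (chaining) argument to pass from time $N$ to arbitrary time $n$. Write $n = qN + s$ with $0 \le s < N$. I would cover $\bar B_n(x,\beta)$ as follows: first take an $(N,\delta)$-spanning set $Y_0$ for $\bar B_N(x,\beta)$ of cardinality at most $c_0 e^{(a+\e)N}$; for each $y_0 \in Y_0$, the image $\tilde f^N(\bar B_n(x,\beta) \cap \bar B_N(y_0,\delta))$ is contained in a dynamical ball of radius comparable to $\beta$ (up to uniform continuity constants) around $\tilde f^N(y_0)$, which lies again in (a compact neighbourhood of) $K$, so it admits an $(N,\delta)$-spanning set of the same controlled size. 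Iterating $q$ times and handling the leftover block of length $s$ with a crude bound gives
$$
r_n\bigl(\bar B_n(x,\beta),\delta\bigr) \le C \cdot \bigl(c_0 e^{(a+\e)N}\bigr)^{q+1} \le c\, e^{(a+\e)n}
$$
after absorbing the bounded factors and the $O(1)$ discrepancy between $(q+1)N$ and $n$ into the constant $c$. Some care is needed because $\tilde f$ is only uniformly continuous on compact sets, not globally, so at each step one must track that all dynamical balls encountered stay inside a fixed compact set; this is guaranteed since $K$ is compact, $p(K)=V$ with $V$ compact, and the $\Gamma$-action is by isometries, so $\bar B_n(x,\beta)$ projects into the compact $V$ and its $\tilde f$-iterates do too.

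A technical wrinkle worth flagging: the radii do not match cleanly under iteration — a $\delta$-ball pushed forward by $\tilde f^N$ need not sit inside a $\delta$-ball — so one should run the argument with two scales, proving the single-scale bound for $\bar B_N(x,2\beta)$ with $\delta$-spanning sets and then using $\e/2$, $\delta$, $\beta$ bookkeeping, or equivalently choosing the uniform $N$ so that the uniform continuity modulus of $\tilde f^N$ is controlled. I would set this up at the start by fixing, via uniform continuity of $\tilde f, \dots, \tilde f^{N}$ on a compact neighbourhood of $K$, a modulus that lets each spanning ball of one block be refined inside the next block. With these constants pinned down, the chaining estimate and the final exponential bound follow, and Lemma \ref{lemma bowen} — hence Theorem \ref{bowen expansive} via the standard relation $\h(f) = \h(f,V) \le \h(\tilde f, K, \beta) + \hloc(\tilde f,\beta)$ obtained by covering an $(n,\e)$-spanning (or separated) set of $V$ lifted to $K$ by the local balls above — is proved.
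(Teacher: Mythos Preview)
Your Step~1 contains a genuine gap, and it is precisely the ``subtle point'' you flag but do not resolve. You want a single $N$ with
\[
r_N\bigl(\bar B_N(x,\beta),\delta\bigr)\le e^{(a+\e)N}\qquad\text{for all }x\in K,
\]
and you propose to get it by a compactness/contradiction argument, using that ``the nested intersection $\bigcap_{k}\bar B_{m_k}(x_k,\beta)$ along a convergent sequence is contained in some $Z_\beta(z)$''. This is the point that fails: the intersection of forward Bowen balls $\bar B_m(x,\beta)=\bigcap_{0\le j<m}\tilde f^{-j}\bar B(\tilde f^jx,\beta)$ as $m\to\infty$ (and $x_k\to z$) yields only the \emph{one-sided} set $\{y:d(\tilde f^j z,\tilde f^jy)\le\beta\ \forall\,j\ge 0\}$, not the two-sided $Z_\beta(z)$. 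The hypothesis $a=\hloc(\tilde f,\beta)$ controls the entropy of $Z_\beta(z)$ and says nothing about this forward stable set, which can have strictly larger entropy. Consequently your contradiction argument does not close, and since your chaining in Step~2 needs the single-scale bound with constant $\le 1$ (otherwise the product $c_0^{\,q}$ blows up), the whole scheme stalls. A secondary issue is that in Step~2 the relevant image is $\tilde f^{kN}\bigl(\bar B_n(x,\beta)\bigr)\subset \bar B_N(\tilde f^{kN}x,\beta)$, a ball around $\tilde f^{kN}x$ rather than around the spanning point $y_0$; the iterate $\tilde f^{kN}x$ generally lies far from $K$, and the $\Gamma$-action must be invoked explicitly (not just mentioned) to pull it back.

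The paper's proof is organised around exactly this one-sided/two-sided distinction. Its key observation is that for $t$ well inside $[0,n]$ one has
\[
\tilde f^t\bigl(\bar B_n(x,\beta)\bigr)\;=\;\bigcap_{j=-t}^{\,n-1-t}\tilde f^{-j}\bar B\bigl(\tilde f^j(\tilde f^tx),\beta\bigr)\;\subset\;W_{n_0}(\tilde f^tx,\beta),
\]
a genuinely two-sided window of width $n_0$. For each $y\in K$ one first picks $m(y)$ and an $(m(y),\delta/2)$-spanning set $E(y)$ for $Z_\beta(y)$ with $\#E(y)\le e^{(a+\e)m(y)}$, then uses that $W_N(y,R)\searrow Z_\beta(y)$ to find $N(y),R(y)$ with $W_{N(y)}(y,R(y))\subset U(y):=\bigcup_{z\in E(y)}B_{m(y)}(z,\delta/2)$, and finally chooses open $V(y)$ so that $z\in V(y)$ forces $W_{N(y)}(z,\beta)\subset U(y)$. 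Compactness of $K$ and the $\Gamma$-action give finitely many $y_1,\dots,y_s$ whose $\Gamma$-translates of $V(y_i)$ cover $\tilde V$. The chaining then uses \emph{variable} step lengths $m(y_{i_k})$, following the orbit $\tilde f^{t_k}x$ through the translates $\gamma_k V(y_{i_k})$, with a crude bound only on the first and last $n_0$ steps. This is Bowen's actual device; the fixed-block-length submultiplicativity you sketch is not how the two-sided information in $\hloc$ is accessed.
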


We need the following elementary lemma (see \cite{Bo}).

\begin{lemma}\label{lemmabowen1}
Let $F\subset \tilde V$ and consider integers $0 = t_0 < t_1 < ... < t_r =n$. For $\al>0$ and $0\leq i <r$ let $E_i $ be a $(t_{i+1} -t_i, \al)$-spanning set
for $\tilde f^{t_i}(F)$. Then
\[ r_n(F, 2 \alpha) \leq \prod_{i=0}^{r-1} \# E_i . \]
\end{lemma}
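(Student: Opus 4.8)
The plan is to prove the estimate by a direct covering argument: I will exhibit an $(n,2\al)$-spanning set for $F$ whose elements are indexed by the tuples in $E_0\times\cdots\times E_{r-1}$, and then read off the inequality. We may assume every $E_i$ is finite, else there is nothing to prove.

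First I would fix, for each $x\in F$ and each $i$ with $0\le i<r$, a point $y_i(x)\in E_i$ with
\[
\tilde d_{t_{i+1}-t_i}\bigl(\tilde f^{t_i}(x),\,y_i(x)\bigr)\le\al ,
\]
which exists because $E_i$ is $(t_{i+1}-t_i,\al)$-spanning for $\tilde f^{t_i}(F)\ni\tilde f^{t_i}(x)$. This defines a map $\psi\colon F\to E_0\times\cdots\times E_{r-1}$, $\psi(x)=(y_0(x),\dots,y_{r-1}(x))$. For every tuple $t$ in the image of $\psi$ I then choose a representative $x_t\in\psi^{-1}(t)$ and set $Y:=\{x_t : t\in\psi(F)\}$, so that $\#Y\le\#\psi(F)\le\prod_{i=0}^{r-1}\#E_i$.

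The key step is to verify that $Y$ is $(n,2\al)$-spanning for $F$. Given $x\in F$, put $x^*:=x_{\psi(x)}$, so $\psi(x)=\psi(x^*)=(y_0,\dots,y_{r-1})$. For $0\le j<n$ there is a unique $i$ with $t_i\le j<t_{i+1}$; writing $j=t_i+k$ with $0\le k<t_{i+1}-t_i$, using $\tilde f^{j}=\tilde f^{k}\circ\tilde f^{t_i}$ and the triangle inequality through $\tilde f^{k}(y_i)$, one gets
\[
\tilde d(\tilde f^{j}(x),\tilde f^{j}(x^*))\le\tilde d_{t_{i+1}-t_i}(\tilde f^{t_i}(x),y_i)+\tilde d_{t_{i+1}-t_i}(\tilde f^{t_i}(x^*),y_i)\le 2\al ,
\]
since $0\le k<t_{i+1}-t_i$ means the $j$-th orbit gap is dominated by the corresponding $\tilde d_{t_{i+1}-t_i}$-distance. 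Taking the maximum over $0\le j<n$ gives $\tilde d_n(x,x^*)\le 2\al$, i.e. $x\in\bar B_n(x^*,2\al)$. Hence $F\subset\bigcup_{t\in\psi(F)}\bar B_n(x_t,2\al)$, and therefore $r_n(F,2\al)\le\#Y\le\prod_{i=0}^{r-1}\#E_i$.

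I do not anticipate a genuine obstacle, as this is an elementary combinatorial covering lemma. The only point that demands care is the bookkeeping with the time windows: one must check that the blocks $\{t_i,\dots,t_{i+1}-1\}$ partition $\{0,1,\dots,n-1\}$ and that the $(t_{i+1}-t_i,\al)$-spanning property of $E_i$ controls the orbit of $x$ precisely on the $i$-th block after shifting time by $t_i$. The loss of the factor $2$ in $2\al$ is the usual cost of comparing two points through a common $\al$-close center and is intrinsic to this method.
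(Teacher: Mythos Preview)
Your proof is correct and follows essentially the same route as the paper's: both partition $F$ according to the tuple $(y_0,\dots,y_{r-1})\in E_0\times\cdots\times E_{r-1}$ of chosen $\al$-approximants along the successive time windows, pick one representative per nonempty fiber, and use the triangle inequality through the common approximant on each block to obtain the $2\al$ bound. The only cosmetic difference is that the paper phrases the fibers as the sets $B(x_0,\dots,x_{r-1})$ directly rather than via a map $\psi$.
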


\begin{proof}[Proof of \ref{lemmabowen1}]
For $(x_0, \ldots, x_{r-1}) \in E_0 \times \cdots \times E_{r-1}$ set
\begin{align*}
& B(x_0, \ldots, x_{r-1}) \\
& := \{ x \in F \mid ~ d(\tilde f^{t+t_i}(x), \tilde f^t(x_i)) \le \alpha ~~ \forall ~ 0 \le i <r, t \in [0, t_{i+1} -t_i]\cap\Z \}.
\end{align*}
By assumption the $B(x_0,...,x_{r-1})$ cover $F$ and using the triangle inequality we have $d_n(x,y) \le 2 \alpha $ for all $x,y \in B(x_0, ..., x_{r-1})$. Choosing from each nonempty set $B(x_0, \ldots, x_{k-1})$ one element we obtain a $(n,2 \alpha)$-spanning set. This yields the estimate.
\end{proof}

\begin{proof}[Proof of \ref{lemma bowen}]
In the following fix positive numbers $\e,\delta,\beta>0$, a point $x\in K$, an integer $n\in\N$ and set $F:= \bar B_n(x,\beta)$. We shall try to describe the orbit $\{x, \tilde f x, ... , \tilde f^{n-1} x\}$ by a finite collection of $y$'s in $K$ and their sets $Z_\beta(y)$.

\underline{Step 1.} (choice of $y_1,...,y_s\in K$ and appropriate neighborhoods $V(y_i)$) By definition of $a$ we find for all $y\in K$ some integer $m(y)\in\N$ and a $(m(y),\delta/2)$-spanning set $E(y)$ for $Z_\beta(y)$ with
\[ \frac{1}{m(y)} \log \# E(y) \leq a+\e . \]
Define the open neighborhoods
\[ U(y) := \bigcup_{z\in E(y)} B_{m(y)}(z,\delta/2) ~\supset ~ Z_\beta(y), \quad y\in K .\]
For $N\to\infty, R \searrow \beta$ the compact sets 
\[ W_N(y,R) := \bigcap_{|j|\leq N} \tilde f^{-j} \bar B(\tilde f^jy, R)  \]
decrease to the compact set $Z_\beta(y)$, so we find $N(y)\in\N,R(y)>\beta$, s.th. $W_{N(y)}(y,R(y))$ is contained in the neighborhood $U(y)$ of $Z_\beta(y)$. Define
\[V(y) := \Int W_{N(y)}(y,R(y)-\beta) , \quad y\in K. \]
The triangle inequality implies that
\[ (*) \qquad  \forall z\in V(y): \quad W_{N(y)}(z,\beta) \subset W_{N(y)}(y,R(y)) \subset U(y). \]
By the compactness of $K$ we find $y_1,...,y_s\in K$ with
\[ \tilde V = \bigcup_{\gamma\in\Gamma}\bigcup_{i=1}^s \gamma V(y_i). \]
Set
\[n_0 := \max_{1\leq i\leq s}\max \{ N(y_i),m(y_i) \} \in \N. \]

\underline{Step 2.} (describtion of $F$ by the $y_i$'s) We claim the following:
\[ (**) \qquad  \forall t\in [n_0,n-n_0)\cap \Z ~~\exists i\in\{1,...,s\}, \gamma\in\Gamma: \quad \tilde f^t (F) \subset \gamma U(y_i). \]
{\it Proof of the claim}. We find $\gamma,i$ with $\tilde f^tx\in \gamma V(y_i)$, and hence
\begin{align*}
\tilde f^t(F) & = \bigcap_{j=0}^{n-1} \tilde f^{t-j} \bar B(\tilde f^jx,\beta) = \bigcap_{j=-t}^{n-t-1} \tilde f^{-j} \bar B(\tilde f^j\tilde f^tx,\beta) \subset \bigcap_{j=-n_0}^{n_0} \tilde f^{-j} \bar B(\tilde f^j\tilde f^tx,\beta) \\
& = W_{n_0}(\tilde f^tx,\beta) = \gamma W_{n_0}(\gamma^{-1}\tilde f^tx,\beta) \subset \gamma U(y_i),
\end{align*}
where in the second line we used $\Gamma\subset \Iso(\tilde V,d)$, $[\tilde f,\Gamma]=0$ and $(*) , n_0 \geq N(y_i)$.

\underline{Step 3.} (application of lemma \ref{lemmabowen1}) As a consequence of $(**)$, the set $\gamma E(y_i)$ is $(m(y_i),\delta/2)$-spanning for $\tilde f^t (F)$. We want to apply \ref{lemmabowen1}, so we define integers $0=t_0 < ... < t_r = n$ as follows.
\begin{enumerate}
 \item If $n\leq n_0$ take $r=1$ and $t_1=n$.
 \item If $n>n_0$, take $t_1=n_0$ and choose $i_1\in\{ 1,...,s \}, \gamma_1 \in \Gamma$ with $\tilde f^{t_1}(x)\in \gamma_1V(y_{i_1})$. Suppose now we have already choosen $t_1, ... , t_k$ with $t_k<n$ together with $i_1,...,i_k, \gamma_1,...,\gamma_k$.
 \begin{enumerate}
  \item If $t_k \geq n-n_0$, set $r=k+1$ and $t_r=n$.
  \item If $t_k < n-n_0$, set $t_{k+1}=t_k+m(y_{i_k})$ and choose $i_{k+1},\gamma_{k+1}$ with $\tilde f^{t_{i_{k+1}}}(x)\in \gamma_{k+1}V(y_{i_{k+1}})$.
\end{enumerate}
Eventually we are in case (a) and the process stops. Moreover we have $t_{r-2} <n-n_0 \leq t_{r-1} < n = t_r$ by $m(y_{i_{r-2}})\leq n_0$.
\end{enumerate}
Note that $t_{k+1}-t_k\leq n_0$ for $k=0,r-1$ and by $(**)$ the set $\gamma_k E(y_{i_k})$ is $(t_{k+1}-t_k, \delta/2)$-spanning for $\tilde f^{t_k}(F)$ for $k=1,...,r-2$. Choose $E_0,E_{r-1}$ to be $(n_0,\delta/2)$-spanning for $\bar B(x,\beta), \bar B(\tilde f^{t_{r-1}}x,\beta)$, respectively of minimal cardinality, so $E_0$ is also $(t_1-t_0,\delta/2)$-spanning for $F$ and $E_{r-1}$ is also $(t_r-t_{r-1},\delta/2)$-spanning for $\tilde f^{t_{r-1}}(F)$. Apply \ref{lemmabowen1} to
\[ E_0, ~ E_1 := \gamma_1 E(y_{i_1}) , ~..., ~ E_{r-2} := \gamma_{r-2} E(y_{i_{r-2}}), ~ E_{r-1} \]
and define
\[ \sqrt c :=  \sup_{y\in K} r_{n_0}(\bar B(y,\beta), \delta/2 ) ~~<\infty. \]
We obtain using the definition of $m(y_i)$ and $\sum_{k=1}^{r-2} m(y_{i_k}) \leq n-n_0 \leq n$ that
\begin{align*}
r_n(F,\delta) &\leq \# E_0 \cdot \left( \prod_{k=1}^{r-2} \# E_k \right) \cdot \# E_{r-1} \leq c \cdot \prod_{k=1}^{r-2} \# E(y_{i_k}) \\
&\leq c \cdot \prod_{k=1}^{r-2} e^{(a+\e)m(y_{i_k})} \leq  c \cdot e^{(a+\e)n}.
\end{align*}
Observe that $c$ depends only on $\delta, n_0, \beta$ and $n_0$ in turn is indepenent of $x,n$.
\end{proof}

Now we are able to prove the theorem.

\begin{proof}[Proof of \ref{bowen expansive}]
Let $E_n$ be a minimal $(n,\beta)$-spanning set for $K$ and let $\e,\delta >0$. Then
\[ K \subset \bigcup_{x\in E_n} \bar B_n(x,\beta) \]
and by \ref{lemma bowen} each of the sets in the above union can be $(n,\delta)$-spanned by using only $c e^{(a+\e)n}$ elements where $a=\hloc(\tilde f, \beta)$. Hence
\[ r_n(K,\delta) \leq \# E_n \cdot c e^{(a+\e)n} \leq r_n(K,\beta) \cdot c e^{(a+\e)n} \]
and
\[ \h(\tilde f, K, \delta)\leq \h(\tilde f, K, \beta)+ a+\e. \]
Letting $\e,\delta\to 0$, the claim follows using \ref{walters}.
\end{proof}

\section{Bounds for topological entropy }

\subsection{Lower Bound}

We need the following theorem stated in the book \cite{KH} of Katok and Hasselblatt on the topological entropy of minimal geodesics on Riemannian manifolds. For the convenience of the reader we will provide here a complete proof of the result, which differs from the one in \cite{KH} in small details. Recall the notation $p:\tilde M \to M$ for the universal cover of $M$ and
\begin{align*}
\tilde \M & = \{ v \in S\tilde M \mid c_v \mbox{ is a minimizing geodesic } \} \subset S\tilde M , \\
\M & = Dp(\tilde \M)\subset SM .
\end{align*}

\begin{theorem}\label{katok hasselblatt}
Let $(M,g)$ be a compact Riemannian manifold and $ \phi^t_\M$ be the geodesic flow $\phi ^t$ restricted to $\M\subset SM$. Then
$$ 
\h(\phi ^t_\M) \geq h(g).
$$
\end{theorem}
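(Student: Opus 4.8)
Following \cite{KH}, the plan is to pass to the universal cover and construct, at a single fixed small scale $\e>0$, exponentially many dynamically separated orbit pieces inside $\M$. We may assume $h(g)>0$. Since $\M=Dp(\tilde\M)$ is closed in the compact manifold $SM$ it is compact, so
\[ \h(\phi^t_\M)=\h(\phi^1_\M)=\h(\phi^1_\M,\M)\ \ge\ \h(\phi^1_\M,\M,\e)\ \ (\e>0); \]
together with the inequality $s_n(F,\e)\le r_n(F,\e/2)$ from \S\ref{def h_top}, this reduces the theorem to the following: for one $\e>0$ and some sequence $n\to+\infty$ there are $(n,\e)$-separated sets $Y_n\subset\M$ with $\varlimsup_n\frac1n\log\#Y_n\ge h(g)$. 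It is convenient to argue for the continuous-time flow $\phi^t_\M$, whose entropy agrees with that of its time-one map (cf.\ Section 2), and to use that one may take the metric on $SM$ induced from a $\Gamma$-invariant metric on $S\tilde M$, so that $d_{SM}(\xi,\eta)=\min_{\gamma\in\Gamma}d_{S\tilde M}(\tilde\xi,\gamma\tilde\eta)$ for any lifts.

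\emph{Step 1: from volume growth to a packing on large spheres.} Fix $p\in\tilde M$ and a compact fundamental domain $\bar D\ni p$ for the deck group $\Gamma=\pi_1(M)$; put $\rho=\diam\bar D$, let $\delta_0>0$ be the minimal displacement of $\Gamma$ on $S\tilde M$ (positive, since the action is free, properly discontinuous and cocompact), and fix $\e\in(0,\min(\rho,\delta_0/3))$. Let $\Phi=\{\gamma\in\Gamma:\,d(p,\gamma p)\le 3\rho\}$, a finite symmetric set. For large $L$ split $B(p,L)$ into the unit annuli $A_j=\{x:\,j-1<d(p,x)\le j\}$, $1\le j\le L$, and choose in each $A_j$ a maximal subset $T_j$ such that the balls $\gamma B(s,3\rho)$, $s\in T_j$, $\gamma\in\Phi$, are pairwise disjoint. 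Maximality gives $\#T_j\ge\vol A_j/C$ with $C=C(M,g)$, hence $\sum_j\#T_j\ge\vol B(p,L)/C$ and some index $j=k(L)$ satisfies $\#T_{k(L)}\ge\vol B(p,L)/(CL)$; put $S_L=T_{k(L)}$. A short estimate based on $\frac1r\log\vol B(p,r)\to h(g)>0$ then gives $k(L)/L\to1$ and $\frac1{k(L)}\log\#S_L\to h(g)$. By construction $d(s,\gamma s')\ge 6\rho$ whenever $s\neq s'$ in $S_L$ and $\gamma\in\Phi$, and $d(p,s)\in(k(L)-1,k(L)]$ for all $s\in S_L$.

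\emph{Step 2: realising the packing by minimal geodesics --- the main obstacle.} For each $s\in S_L$ choose a minimizing segment $\sigma_s$ from $p$ to $s$. The crucial claim is that one can attach to $\sigma_s$ a complete \emph{minimal} geodesic $\gamma_s:\R\to\tilde M$, i.e.\ $\dot\gamma_s(0)\in\tilde\M$, which $\rho$-shadows $\sigma_s$ near its two endpoints: $d(\gamma_s(0),p)\le\rho$ and $d(\gamma_s(k(L)),s)\le\rho$. The natural way to produce $\gamma_s$ is a compactness argument --- take longer and longer minimizing segments passing through fixed neighbourhoods of $p$ and of $s$, use the $\Gamma$-action to keep their midpoints in a compact set, and extract a limiting complete geodesic via Arzel\`a--Ascoli; the limit is minimal because minimality of a segment is a closed condition. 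I expect this to be the heart of the proof: one must guarantee both that the limiting geodesic stays anchored near $p$ and near $s$, and that as $L\to\infty$ the exponentially many segments do not all accumulate onto a low-complexity part of $\tilde\M$ --- equivalently, one needs a uniform shadowing statement to the effect that a geodesic which is minimizing on a very long interval is $\e$-shadowed over a comparably long time by a genuine element of $\tilde\M$. This forces one to exploit actual properties of minimal geodesics in $\tilde M$ rather than soft compactness alone, and is where the argument of \cite{KH} must do its real work.

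\emph{Step 3: transfer of separation and conclusion.} Put $Y_L=\{Dp(\dot\gamma_s(0)):\,s\in S_L\}\subset\M$ and $n=k(L)$. Suppose $Dp(\dot\gamma_s(0))$ and $Dp(\dot\gamma_{s'}(0))$ are not $(n,\e)$-separated. Then for every $t\in[0,n]$ there is a deck transformation $\gamma_t$ with $d_{S\tilde M}(\dot\gamma_s(t),\gamma_t\dot\gamma_{s'}(t))=d_{SM}(\phi^tDp(\dot\gamma_s(0)),\phi^tDp(\dot\gamma_{s'}(0)))\le\e$, and $\gamma_t$ is unique since any other choice gives distance $\ge\delta_0-\e>\e$; by uniqueness and continuity of the geodesic flow $\gamma_t\equiv\gamma$ is constant on $[0,n]$. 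Evaluating at $t=0$ with $d(\gamma_s(0),p),d(\gamma_{s'}(0),p)\le\rho$ forces $d(p,\gamma p)\le 2\rho+\e<3\rho$, so $\gamma\in\Phi$; evaluating at $t=k(L)$ with $d(\gamma_s(k(L)),s),d(\gamma_{s'}(k(L)),s')\le\rho$ forces $d(s,\gamma s')\le 2\rho+\e<6\rho$ with $\gamma\in\Phi$, contradicting Step 1. Hence $Y_L$ is $(n,\e)$-separated and $s\mapsto Dp(\dot\gamma_s(0))$ is injective on $S_L$, so $\#Y_L=\#S_L=e^{(h(g)-o(1))n}$; by the reduction of the first paragraph, $\h(\phi^t_\M)\ge h(g)$, which is Theorem \ref{katok hasselblatt}.
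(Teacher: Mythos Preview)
Your Step~2 is indeed the gap, and you correctly identify it as such. The uniform shadowing statement you want --- that every long minimizing segment can be $\rho$-shadowed near its endpoints by a complete minimal geodesic, uniformly in the length --- is not proved in the paper and does not follow from soft compactness for a general compact Riemannian manifold. The paper's argument avoids it entirely by a different mechanism.

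The device is an outer approximation of $\M$: set
\[
\M_k := Dp\bigl\{\, v \in S\tilde M : c_v|_{[-\sqrt{T_k},\,\sqrt{T_k}]}\ \text{is minimizing}\,\bigr\},\qquad \bigcap_k\M_k=\M.
\]
If $S_k\subset SM$ is the (projected) $(T_k,\delta/2)$-separated set of initial vectors of minimizing segments of length $\approx T_k$ emanating from a single fixed point (this is essentially your Step~1, and in fact simpler: with a common basepoint the $\Phi$-bookkeeping becomes unnecessary, $\delta=\inj(M)$ suffices), then one has the trivial but crucial inclusion $\phi^t S_k\subset\M_k$ for all $t\in[\sqrt{T_k},T_k-\sqrt{T_k}]$: the middle portion of a long minimizing segment is automatically minimizing on a centred window of length $2\sqrt{T_k}$. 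The submultiplicativity Lemma~\ref{lemmabowen2} is then applied twice: first to peel off the initial interval $[0,\sqrt{T_k}]$ at a cost of at most $e^{2b\sqrt{T_k}}$ with $b=\h(\phi^t_{SM})$, and second to chop the remaining interval into $m_k\approx T_k/T$ pieces of fixed length $T$; pigeonhole then produces, for each fixed $T$, a $(T,\delta/8)$-separated set inside some $\phi^{iT+\sqrt{T_k}}S_k\subset\M_k$ of size $\gtrsim e^{h(1-2\e)T}$. Finally, since the $\M_k$ are compact and decreasing with intersection $\M$, one lets $k\to\infty$ with $T,\delta$ fixed and extracts by compactness a $(T,\delta/8)$-separated subset of $\M$ of the same size. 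So the missing idea is: do not try to attach a complete minimal to an individual finite segment; instead use that a vector whose geodesic is minimizing on ever longer symmetric intervals lies in $\M$ by definition, and trade one long separated orbit in $\M_k$ for many short ones via Lemma~\ref{lemmabowen2} before passing to the limit.
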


For the proof of \ref{katok hasselblatt} we need a lemma similar to lemma \ref{lemmabowen1}. Recall that $s_T(A, \delta)$ denotes the maximal cardinality of a $(T,\delta)$-separated subset of $A$.

\begin{lemma}\label{lemmabowen2}
Let $(V,d)$ be a metric space, $\phi^t : V \to V$ a continuous flow and $A\subset V$. For times $0 = t_0 < t_1 < \cdots < t_m = T$ and $\delta > 0$ we have
\begin{eqnarray*}
\prod\limits_{i = 1}^m s_{t_i - t_{i-1}} (\phi^{t_{i-1}} A,
\delta) \geq s_T (A, 2 \delta),
\end{eqnarray*}
\end{lemma}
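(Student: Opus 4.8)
The plan is to prove the reverse inequality to Lemma \ref{lemmabowen1} by an almost identical combinatorial argument, but now tracking a \emph{separated} set rather than a \emph{spanning} one. Fix a $(T,2\delta)$-separated subset $S\subset A$ of maximal cardinality $s_T(A,2\delta)$. I want to show that $S$ injects into the product $\prod_{i=1}^m E_i$, where $E_i$ is a maximal $(t_i-t_{i-1},\delta)$-separated subset of $\phi^{t_{i-1}}A$; since $\#E_i=s_{t_i-t_{i-1}}(\phi^{t_{i-1}}A,\delta)$, this immediately yields the claimed product bound. The key point is the standard fact that a maximal $(n,\e)$-separated subset is automatically $(n,\e)$-spanning: if $E_i$ failed to $\delta$-span $\phi^{t_{i-1}}A$ in the $(t_i-t_{i-1})$-metric, one could enlarge it, contradicting maximality.

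First I would, for each $x\in S$ and each index $i\in\{1,\dots,m\}$, use the spanning property of $E_i$ applied to the point $\phi^{t_{i-1}}(x)\in\phi^{t_{i-1}}A$ to select some $\psi_i(x)\in E_i$ with
\[
d\bigl(\phi^{t_{i-1}+t}(x),\phi^{t}(\psi_i(x))\bigr)\le\delta\qquad\text{for all }t\in[0,t_i-t_{i-1}]\cap\Z,
\]
where one works over the integer times, consistent with the reduction to the discrete system noted in the paper. This defines a map $\Psi:S\to E_1\times\cdots\times E_m$, $x\mapsto(\psi_1(x),\dots,\psi_m(x))$. The heart of the argument is to check that $\Psi$ is injective: suppose $\Psi(x)=\Psi(x')$ with $x,x'\in S$. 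Then for any time $t\in[0,T]\cap\Z$ there is a unique $i$ with $t\in[t_{i-1},t_i]$, and writing $t=t_{i-1}+s$ we get, by the triangle inequality,
\[
d\bigl(\phi^t(x),\phi^t(x')\bigr)\le d\bigl(\phi^t(x),\phi^s(\psi_i(x))\bigr)+d\bigl(\phi^s(\psi_i(x')),\phi^t(x')\bigr)\le 2\delta.
\]
Hence $x$ and $x'$ are \emph{not} $2\delta$-separated in the $d_T$-sense, so $x=x'$ since $S$ was a separated set — well, strictly speaking $d_T(x,x')\le 2\delta$ together with $x,x'$ lying in a $(T,2\delta)$-separated set forces $x=x'$. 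Therefore $\#S\le\prod_{i=1}^m\#E_i$, which is exactly the inequality asserted.

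I expect the only subtlety — hardly an obstacle — to be bookkeeping about the endpoints of the intervals $[t_{i-1},t_i]$: a time $t=t_i$ lies in two consecutive intervals, so one must make sure the inequality $d(\phi^t(x),\phi^t(x'))\le2\delta$ is extracted consistently (it holds from either interval, so this causes no trouble), and one must be slightly careful that in $d_T$ the maximum runs over $0\le t<T$ or $0\le t\le T$ according to the paper's convention in Section \ref{def h_top}, but in either case the bound $2\delta$ covers all relevant times. Everything else is the same maximal-separated-equals-spanning observation plus the triangle inequality used in Lemma \ref{lemmabowen1}, now read in the opposite direction.
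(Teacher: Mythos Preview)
Your proposal is correct and follows essentially the same approach as the paper: both take maximal $(t_i-t_{i-1},\delta)$-separated sets $E_i$ in $\phi^{t_{i-1}}A$, use that maximal separated sets are spanning, and then apply the triangle inequality to see that any two points of the $(T,2\delta)$-separated set sharing the same ``code'' in $E_1\times\cdots\times E_m$ must coincide. The paper phrases this via boxes $B(x_1,\ldots,x_m)$ each containing at most one point, while you phrase it as injectivity of a map $\Psi$; these are the same argument, and your discussion of integer versus continuous times is an unnecessary detour since the lemma is for a continuous flow, but it does no harm.
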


\begin{proof}[Proof of \ref{lemmabowen2}]
Let $L$ be a maximal $(T, 2\delta)$-separated subset of $A$ and let $L_i$ be maximal $(t_i-t_{i-1}, \delta)$-separated subsets of $\phi^{t_{i-1}} (A)$ for $i=1,...,m$ . For $(x_1, \ldots, x_m) \in L_1 \times \cdots \times L_m$ set
\begin{align*}
& B(x_1, \ldots, x_m) := \\
& \{z \in L \mid ~ d (\phi^{t+ t_{i-1}}z, f^t x_i) \leq \delta ~~  \forall 1\leq i \leq m, t\in [0,t_i - t_{i-1}]\}.
\end{align*}
Since $L$ is $(T,2\delta)$-separated, the triangle inequality implies $\# B(x_1, \ldots, x_m) \leq 1$. Therefore, since the cardinalities of the $L_i$ are maximal implying that they are also $(t_i-t_{i-1}, \delta)$-spanning,
\begin{eqnarray*}
\# L = \# \left (\bigcup_{(x_1, ...., x_m) } B(x_1, ..., x_m) \right ) \leq  \prod\limits_{i=1}^m \# L_i.
\end{eqnarray*}
\end{proof}

\begin{proof}[Proof of \ref{katok hasselblatt}]
Fix $x \in \tilde M$, $\e>0$ and write
\[ \delta := \inj(M) >0, \quad h := h(g), \quad a := \sup_{y\in \tilde M} \vol B(y,2\delta), \quad  b :=\h(\phi^t_{SM}). \]

We have the following: there exists a sequence $T_k\to \infty$ such that
\[ \vol B(x,T_k + \delta/2)-\vol B(x,T_k) \geq e^{h(1-\e)T_k}, \]
for otherwise adding up the volume of the annuli $B(x,T_k + \delta(2) \setminus B(x,T_k)$ with $T_{k+1} = T_k + \delta/2$ starting at $T_0$ sufficiently large would yield that the exponential growth rate is less than  $h\cdot (1-\e)$.

Let $N_k$ be a maximal $2\delta$-separated set in the annulus $\bar B(x,T_k+ \delta/2) \setminus B(x,T_k)$, then we have for all $k\in \N$
\[ a \cdot \# N_k \geq \vol \left (\bigcup_{y\in N_k} B(y,2\delta)\right ) \geq \vol B(x,T_k + \delta/2)-\vol B(x,T_k) \geq e^{h(1-\e)T_k} .\]
For $y\in N_k$ let $c_y:[0,d(x,y)]\to\tilde M$ be a minimal geodesic segment with $c(0)=x$ and $c(d(x,y))=y$. Now, if $y_1,y_2\in N_k$ with $y_1\neq y_2$ we have
\[ d(c_{y_1}(T_k),c_{y_2}(T_k)) \geq d(y_1,y_2) -d(y_1,c_{y_1}(T_k))-d(y_2,c_{y_2}(T_k)) > \delta, \]
so the sets
\[ \tilde S_k :=\{\dot c_y(0) : y \in N_k\} \]
are $(T_k,\delta)$-separated w.r.t. the metric $d_1$ on $S\tilde M$, defined as
\[d_1(v,w) = \max_{t\in[0,1]} d(c_v(t),c_w(t)). \]
In $SM$ the sets $S_k := Dp(\tilde S_k)$ are $(T_k,\delta/2)$-separated. Define the decreasing sequence of compact sets
\[ \M_k := Dp \left\{v\in S\tilde M : c_v:[-\sqrt{T_k},\sqrt{T_k}]\to \tilde M \text{ is minimal}\right\}, \quad \bigcap_{k\in\N} \M_k = \M. \]
In order to find large separated sets in $\M$ we shall find them in the sets $\M_k$, observing that for $t\in [\sqrt{T_k}, T_k-\sqrt{T_k}]$ we have 
\[\phi^tS_k \subset \M_k. \]
Assume $k$ is large enough, s.th.
\[ s_{\sqrt{T_k}}(S_k,\delta/4) \leq e^{2b\sqrt{T_k}}, \quad \sqrt{T_k} \geq \frac{2b}{\e h} . \]
We apply lemma \ref{lemmabowen2} and obtain
\begin{align*}
& s_{T_k-\sqrt{T_k}}(\phi^{\sqrt{T_k}} S_k , \delta/4) \cdot s_{\sqrt{T_k}}(S_k,\delta/4) \geq s_{T_k}(S_k, \delta/2) \geq \# N_k \geq \frac{1}{a} e^{h(1-\e)T_k} \\
\Rightarrow \quad & s_{T_k-\sqrt{T_k}}(\phi^{\sqrt{T_k}} S_k , \delta/4) \geq \frac{1}{a} e^{h(1-\e)T_k - 2b\sqrt{T_k}} \geq \frac{1}{a} e^{h(1-2\e)T_k}.
\end{align*}
Let now 
\[  T \in (0, T_k-\sqrt{T_k} ] , \quad m_k = \left\lfloor \frac{T_k-\sqrt{T_k}}{T}\right\rfloor \in \N. \]
Applying lemma \ref{lemmabowen2} again gives
\begin{align*}
& \left( \prod_{i = 0}^{m_k-1} s_T(\phi^{iT + \sqrt{T_k}} S_k, \delta/8) \right) \cdot s_{T_k-\sqrt{T_k} - m_kT}(\phi^{m_kT + \sqrt{T_k}} S_k, \delta/8) \\
& \geq s_{T_k-\sqrt{T_k}} (\phi^{\sqrt{T_k}} S_k, \delta/4) \geq \frac{1}{a} e^{h(1-2\e)T_k} \\
\Rightarrow \quad & \prod_{i = 0}^{m_k-1} s_T(\phi^{iT + \sqrt{T_k}} S_k, \delta/8) \geq \frac{\frac{1}{a} e^{h(1-2\e)T_k}}{s_{T_k-\sqrt{T_k} - m_kT}(\phi^{m_kT + \sqrt{T_k}} S_k, \delta/8)} \\
& \geq \frac{\frac{1}{a} e^{h(1-2\e)T_k}}{s_T(SM, \delta/8)} \geq \frac{1}{a} e^{h(1-2\e)T_k - 2bT} ,
\end{align*}
where in the last step we assumed that $T$ is large, so that $s_T(SM, \delta/8) \leq e^{2bT}$. Hence one of the factors in the last product has to be ''large'', i.e. for some $i\in \{0,...,m_k-1\}$ we have
\[ s_T(\phi^{iT + \sqrt{T_k}} S_k, \delta/8) \geq \frac{1}{a} e^{\frac{h(1-2\e)T_k - 2bT}{m_k}} \geq \frac{1}{a} e^{h(1-2\e)T} e^{ - \frac{2bT}{m_k}} . \]
Note also that $\phi^{iT + \sqrt{T_k}} S_k \subset \M_k$, so when letting $k\to\infty$ while fixing $T$ and using $m_k\to\infty$, we find a $(T,\delta/8)$-separated set in $\M=\cap_k \M_k$ of cardinality at least
\[ \frac{1}{a} e^{h(1-2\e)T} \cdot \lim_{k\to\infty} e^{ - \frac{2bT}{m_k}} = \frac{1}{a} e^{h(1-2\e)T} . \]
This proves the theorem:
\[ \h(\phi^t_\M) \geq \h (\phi^t_\M, \delta/8) \geq h - 2\e . \]
\end{proof}

\subsection{Upper Bound for manifolds of hyperbolic type} \label{upbound}

Following Klingenberg \cite{Kl} we call a compact Riemannian manifold $(M,g)$  of hyperbolic type, if there exists a metric of strictly negative curvature $g_0$ on $M$. From now on we assume the existence of such $g_0$ on the compact Manifold $M$. When we lift objects such as $g, g_0$ from $M$ to the universal cover $\tilde M$ we will frequently denote them by the same letters. In the following we write $d$ for the metric on $\tilde M$ induced by $g$ and $d_{g_0}$ for the one induced by the background metric $g_0$. Due to the compactness of $M$ the two metrics on $\tilde M$ are equivalent, i.e. there exists a constant $C>0$ such that 
$$\frac{1}{C}d(p,q)\leq d_{g_0}(p,q) \leq Cd(p,q) \qquad \forall p, q \in \tilde M.$$  
We write $d_1$ for the metric on $S\tilde M$ defined by
$$
d_1(v,w) := \max_{t\in [0,1 ]} d(c_v(t), c_w(t))
$$
and $d_H(A,B)$ for the Hausdorff metric on sets $A,B\subset\tilde M$ w.r.t. $d$.

The following theorem is fundamental for the study of $\M$ in manifolds of hyperboldic type. It has been proven by Morse in dimension 2 and by Klingenberg in arbitrary dimensions.

\begin{theorem}[Morse lemma, cf. \cite{Kl} or \cite{Kn}] \label{theoremmorse}
Let $(M,g)$ be a manifold of hyperbolic type. Then there is a constant $r_0=r_0(g,g_0) >0$ with the following properties. \begin{itemize}
\item[(i)] If $c :[a,b] \rightarrow \tilde M$ and $\alpha : [ a_0, b_0] \rightarrow \tilde M$ are minimizing geodesic segments w.r.t. $g, g_0$, respectively, joining $c(a)=\al(a_0)$ to $c(b)=\al(b_0)$, then
$$d_H(c[a,b], \alpha[a_0, b_0]) \leq r_0.$$
\item[(ii)] For any minimizing $g$-geodesic $c :\R \to \tilde M$ there is a $g_0$-geodesic $\alpha :\R \to \tilde M$ and conversely for any $g_0$-geodesic $\alpha :\R \to \tilde M$ a minimizing $g$-geodesic $c :\R \to \tilde M$ with
$$
d_H(\alpha (\R) , c(\R )) \leq r_0 .
$$
\end{itemize}\end{theorem}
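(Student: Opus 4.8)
The plan is to reduce everything to the constant-curvature-comparison picture of Gromov-hyperbolic geometry, using the fact that $(\tilde M, g_0)$ is a Hadamard manifold of pinched negative curvature, hence $\delta$-hyperbolic for some $\delta$, and that $g$-minimizing geodesics are $(\lambda,\kappa)$-quasi-geodesics with respect to $d_{g_0}$ because of the bi-Lipschitz equivalence $\tfrac1C d \le d_{g_0} \le C d$. First I would make this quasi-geodesic claim precise: if $c:[a,b]\to\tilde M$ is $g$-minimizing, then for $s\le t$ we have $d_{g_0}(c(s),c(t)) \le C\,d(c(s),c(t)) = C\,(t-s) = C\,\ell_g(c|_{[s,t]}) \le C^2 \ell_{g_0}(c|_{[s,t]})$, while trivially $d_{g_0}(c(s),c(t)) \le \ell_{g_0}(c|_{[s,t]})$; combined with the reverse $g_0$-length bound this shows $c$, reparametrized by $g_0$-arclength, is a $(C^2,0)$-quasi-geodesic for $d_{g_0}$. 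The classical Morse (stability of quasi-geodesics) lemma in a $\delta$-hyperbolic space then yields a constant $r_0' = r_0'(C,\delta)$ such that the $g_0$-geodesic $\alpha$ with the same endpoints stays within Hausdorff distance $r_0'$ of $c$ in the $d_{g_0}$ metric; converting back via $d \le C d_{g_0}$ gives (i) with $r_0 := C r_0'$ (up to absorbing the slightly different normalizations). This is the route taken in Klingenberg \cite{Kl} and Knieper \cite{Kn}, so I would cite it rather than reprove the quasi-geodesic stability lemma from scratch.

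For part (ii), I would pass to the limit. Given a $g$-minimizing line $c:\R\to\tilde M$, restrict to $c|_{[-n,n]}$, take the $g_0$-geodesic segment $\alpha_n$ joining $c(-n)$ to $c(n)$; by (i) each $\alpha_n$ stays $r_0$-close (in $d_H$) to $c|_{[-n,n]}$. Parametrize the $\alpha_n$ by $g_0$-arclength so that the $g_0$-nearest point to $c(0)$ sits at parameter near $0$; since the initial vectors $\dot\alpha_n(0)$ live in the compact set $S\tilde M|_{B(c(0),r_0')}$, a subsequence converges, and by continuous dependence of geodesics on initial conditions the $\alpha_n$ converge uniformly on compact sets to a complete $g_0$-geodesic $\alpha:\R\to\tilde M$. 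The uniform bound $d_H(\alpha_n[-n,n], c[-n,n]) \le r_0$ passes to the limit to give $d_H(\alpha(\R), c(\R)) \le r_0$ (possibly after enlarging $r_0$ by a fixed additive constant to account for the endpoints drifting off to infinity — this is harmless). The converse direction, starting from a $g_0$-geodesic $\alpha$ and producing a $g$-minimizing $c$, is symmetric: approximate $\alpha$ by $g_0$-geodesic segments, replace each by a $g$-minimizing segment with the same endpoints (which exists since $(\tilde M,g)$ is a complete simply connected manifold, and is unique among minimizers up to the usual caveats), apply (i), and extract a limit using that limits of $g$-minimizing segments are $g$-minimizing (the minimizing property is closed under $C^1$ convergence on compacta because $d$ is continuous).

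The main obstacle is really the quasi-geodesic stability estimate underlying (i): one must know that $(\tilde M,g_0)$, being a Hadamard manifold whose curvature is bounded away from $0$ and $-\infty$ by compactness of $M$, is genuinely Gromov-hyperbolic with a uniform $\delta$, and then invoke the (non-trivial) fact that quasi-geodesics in $\delta$-hyperbolic spaces fellow-travel genuine geodesics with a constant depending only on $\delta$ and the quasi-geodesic constants. Since the statement explicitly attributes this to Morse and Klingenberg and offers \cite{Kl},\cite{Kn} as references, I would treat it as a black box rather than reconstruct the proof; the only genuinely new bookkeeping is the translation between the $d$- and $d_{g_0}$-metrics via the constant $C$, which is routine. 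A secondary, minor subtlety is the uniqueness/existence of $g$-minimizing segments between prescribed points and the compactness argument for extracting limiting lines, but both are standard consequences of completeness and simple-connectivity together with Arzelà-Ascoli.
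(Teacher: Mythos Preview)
The paper gives no proof of this theorem; it is stated with attribution to \cite{Kl} and \cite{Kn} and used as a black box throughout, which is precisely what you yourself conclude in your final paragraph. Your outline of how a proof would go---$g$-minimizing geodesics are $(C^2,0)$-quasi-geodesics for $d_{g_0}$, the Hadamard manifold $(\tilde M,g_0)$ with curvature pinched away from $0$ is $\delta$-hyperbolic, quasi-geodesic stability gives (i), and an Arzel\`a--Ascoli limit of segments gives (ii)---is correct and is the modern packaging of the argument (Klingenberg's 1971 paper predates Gromov's formalism and argues directly via curvature comparison, but the content is equivalent).

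One small slip in your writeup: the chain $d_{g_0}(c(s),c(t)) \le C\,d(c(s),c(t)) = C(t-s)\le C^2\ell_{g_0}$ is the upper bound, which is automatic for an arclength parametrization anyway. What actually makes $c$ a $(C^2,0)$-quasi-geodesic is the \emph{lower} bound
\[
d_{g_0}(c(s),c(t)) \ge \tfrac1C\, d(c(s),c(t)) = \tfrac1C\,|t-s| = \tfrac1C\,\ell_g(c|_{[s,t]}) \ge \tfrac1{C^2}\,\ell_{g_0}(c|_{[s,t]}),
\]
so that in the $g_0$-arclength parameter $\tau$ one has $d_{g_0}\ge C^{-2}|\tau-\tau'|$. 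This is presumably what you meant by ``combined with the reverse $g_0$-length bound''; the constant $C^2$ is right, only the displayed inequality points the wrong way.
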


In this subsection we prove the following theorem stated in the introduction. As a consequence we immediately obtain corollary \ref{h=h(g) expansive} in the introduction using the results in section \ref{bowen bounds}.

\begin{theorem}\label{upper bound beta}
Let $(M,g)$ be a compact Riemannian manifold of hyperbolic type and $K \subset \tilde M$ a compact set in the universal cover $\tilde M$. 
Let 
$${\cal F}=SK \cap {\tilde {\cal M}},$$
where $SK=\pi^{-1}(K)$. Then there is some constant $\beta$ such that
$$ \h(\phi^t , {\cal F}, \beta) \leq h(g).$$
\end{theorem}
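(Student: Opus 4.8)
The plan is to compare the dynamics of $\phi^t$ on $\mathcal F = SK \cap \tilde{\mathcal M}$ with the geodesic flow of the hyperbolic background metric $g_0$, using the Morse lemma (Theorem \ref{theoremmorse}) as the bridge. The key observation is that a minimal $g$-geodesic through $K$ stays within uniform Hausdorff distance $r_0$ of a $g_0$-geodesic, and the $g_0$-geodesic flow has topological entropy equal to the volume entropy $h(g_0)$ of the \emph{hyperbolic} metric (by Manning's theorem, since nonpositive curvature gives equality). But we want the bound $h(g)$, the volume entropy of our metric $g$, not $h(g_0)$. So the comparison must be done more carefully: rather than comparing entropies of the two flows directly, I would count $(n,\beta)$-spanning sets for $\mathcal F$ by counting $g_0$-geodesics that shadow them, but then re-express the count in terms of $g$-distances.

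First I would fix $\beta$ large enough (depending on $r_0$, the metric equivalence constant $C$, and $\diam K$) that the following holds: if $v,w \in \mathcal F$ with $c_v, c_w$ both minimal and starting in $K$, and if the associated $g_0$-geodesics $\alpha_v, \alpha_w$ (from part (ii) of the Morse lemma, staying $r_0$-close) have the property that $\alpha_v$ and $\alpha_w$ stay within, say, $10 r_0$ of each other on a long time interval $[0,T]$, then $v$ and $w$ are $(T', \beta)$-close in the metric $d_1$-sense for a comparable time $T'$. The point is that two minimal $g$-geodesics that both shadow $g_0$-geodesics which are within distance $\beta$ of one another along a long interval must themselves be uniformly close along a corresponding interval, by two applications of the Morse lemma and the triangle inequality; conversely if $c_v,c_w$ are $d_n$-close (for the $g$-flow) then their $g_0$-shadows are close on a comparable interval.

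Next, I would estimate $r_n(\mathcal F,\beta)$ from above. Cover $\mathcal F$ as follows: for each $v \in \mathcal F$, associate a $g_0$-geodesic $\alpha_v$ starting at a point of the $r_0$-neighborhood $N_{r_0}(K)$ (compact). Two vectors $v,w$ whose shadows $\alpha_v,\alpha_w$ start at nearby points and point in nearby directions, i.e.\ are close in $S\tilde M$ with respect to the $g_0$-Sasaki-type metric on the $1$-jet, will have shadows that stay close for a time proportional to $n$ — here the crucial input is that the $g_0$-geodesic flow is \emph{expansive} (Anosov, strictly negative curvature), so the number of $(n,\epsilon)$-distinguishable $g_0$-geodesic initial conditions in a compact set grows like $e^{h_{top}(\phi^t_{g_0})\, n} = e^{h(g_0)\, n}$. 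That gives $r_n(\mathcal F,\beta) \le \text{const}\cdot e^{h(g_0) n}$, hence $\h(\phi^t,\mathcal F,\beta)\le h(g_0)$. The remaining — and I expect \emph{main} — obstacle is upgrading $h(g_0)$ to $h(g)$: a priori $h(g_0) \ne h(g)$. The fix is to not use the $g_0$-flow's entropy as a black box but to count shadows by the \emph{volume} they sweep out in the $g$-metric. Concretely: a maximal $(n,\beta)$-separated set $Y\subset\mathcal F$ gives, via endpoints $c_v(n)$ for $v\in Y$, a set of points in $\tilde M$ that are pairwise $g$-separated by a definite amount (using minimality of the $c_v$ plus the Morse-lemma-controlled divergence of their shadows to push apart the time-$n$ points by a uniform distance), all lying in $B(p, n + \diam K)$ for a basepoint $p$. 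Disjointness of small $g$-balls around these endpoints then bounds $\#Y$ by $\vol B(p, n+\diam K)/\inf_y \vol B(y,\rho) \le \text{const}\cdot e^{h(g)(n+\diam K)}$, so that $\limsup \frac1n\log s_n(\mathcal F,\beta) \le h(g)$, which is exactly the claim. The technical heart is verifying that the time-$n$ endpoints of $\beta$-separated minimal geodesics are $g$-separated by a uniform amount: this uses that minimal geodesics diverge, and a quantitative Morse lemma argument controlling how fast the $g_0$-shadows (and hence the $g$-geodesics) must spread apart once they are $\beta$-separated at some earlier time.
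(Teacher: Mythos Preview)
Your second approach (the volume-counting via endpoints of a separated set) is correct and is essentially the paper's argument read in the dual direction. The paper constructs explicit $(r-1,\beta)$-spanning sets $P_r$ for $\mathcal F$: take $\epsilon$-nets $K^\epsilon\subset K$ and $K_r^\epsilon$ of the annulus $K_r=\{z:r-a\le d(z,K)\le r\}$ (with $a=\diam K$); for each pair $(y,z)$ draw the $g_0$-geodesic from $y$ to $z$, use the Morse lemma to find a nearby minimal $g$-geodesic $c_{yz}$, and set $P_r=\{\dot c_{yz}(0)\}$. The spanning property (Lemma~\ref{P_r spanning}) is exactly the fact you flag as the technical heart: two minimal $g$-geodesics whose endpoints at times $0$ and $r$ are close must be uniformly close on all of $[0,r]$. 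The paper proves this by passing to the $g_0$-geodesics joining the respective endpoints, invoking \emph{convexity} of $t\mapsto d_{g_0}(\alpha_1(t),\alpha_2(t))$ in negative curvature (so the maximum is at the endpoints), and transferring back via the Morse lemma; this yields the concrete value $\beta=5r_0+(2C^2+1)\epsilon$. Then $\#P_r\le \#K^\epsilon\cdot C_\epsilon\cdot\vol B(x,r+a+\epsilon/2)$ gives $h(g)$. Your route---bounding a maximal $(n,\beta)$-separated set by showing its time-$n$ endpoints are $\rho$-separated---is the contrapositive of the same convexity fact, so it works equally well. One correction to your phrasing: the mechanism is not that separated minimals ``diverge'' or ``spread apart'' forward in time (which would need a quantitative expansion rate you do not have for $g$), but precisely the convexity statement above; framing it that way makes the argument a two-line consequence of the Morse lemma rather than a delicate divergence estimate. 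Your first attempt via $h(g_0)$ is, as you noted, a dead end, and the paper does not take that detour.
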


In order to prove the theorem, we construct spanning sets for $\cal F$. Let $K\subset {\tilde M}$ be a compact set  with $\diam K=a$. For $r >a$ consider
$$
K_r:=\{ z\in {\tilde M} \; | \; r - a \leq d(z,K) \leq r\}
$$ 
Let $K^\e, K_r^\e$ be minimal $\e$-spanning sets for $K,K_r$, respectively. For $y\in K^\e, z \in K_r^\e$, let $\alpha_{yz} : \mathbb{R} \to \tilde M$ be the $g_0$-geodesic connecting $y$ and $z$
such that $\alpha_{yz}(0) =y$ and  $\alpha_{yz}(d_{g_0}(y,z)) =z$. By the Morse lemma, there exists a minimizing $g$-geodesic $c_{yz} : \mathbb{R} \to \tilde M$ $r_0$-close to $\al_{yz}(\R)$. Set
\[ P_r:=\{ \dot c_{yz}(0) : y\in K^\e, z\in K_r^\e \} \subset \tilde \M . \]

\begin{lemma} \label{P_r spanning}
$P_r$ is a $(r-1, \beta)$-spanning set for ${\cal F}$ with respect to the metric $d_1$ where $\beta$ is given by
$\beta:=5r_0+(2C^2+1)\e$.
\end{lemma}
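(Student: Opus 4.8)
The plan is to show that every $v\in{\cal F}$ is $(r-1,\beta)$-close in the metric $d_1$ to some $\dot c_{yz}(0)\in P_r$. Fix $v\in{\cal F}=SK\cap\tilde\M$, so $c_v(0)=\pi(v)\in K$ and $c_v$ is a minimizing $g$-geodesic. First I would locate the two endpoints: the basepoint $c_v(0)$ lies in $K$, and the point $c_v(r)$ on the geodesic lies at $g$-distance $r$ from $c_v(0)$; since $\diam K=a$, the point $c_v(r)$ satisfies $r-a\le d(c_v(r),K)\le r$, hence $c_v(r)\in K_r$. Now pick $y\in K^\e$ with $d(y,c_v(0))\le\e$ and $z\in K_r^\e$ with $d(z,c_v(r))\le\e$. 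This selects a candidate $\dot c_{yz}(0)\in P_r$, where $c_{yz}$ is the minimizing $g$-geodesic $r_0$-close to the $g_0$-geodesic $\alpha_{yz}$ from $y$ to $z$.

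Next I would compare the two minimizing $g$-geodesic segments $c_v|_{[0,r]}$ (from $c_v(0)$ to $c_v(r)$) and $c_{yz}$ (which shadows $\alpha_{yz}$, the $g_0$-geodesic from $y$ to $z$) via the Morse lemma. The key point is that the segment of $c_v$ from $c_v(0)$ to $c_v(r)$ and the $g$-geodesic segment of $c_{yz}$ between the points on it nearest to $y$ and to $z$ are both minimizing $g$-geodesics whose endpoints are within $\e$ of each other; one also needs the $g_0$-minimizing geodesic segment $\alpha_{yz}$ between $y$ and $z$ as the intermediary. Applying Theorem \ref{theoremmorse}(i) to $c_v|_{[0,r]}$ against $\alpha$ joining its $g_0$-endpoints, and to $c_{yz}$ against $\alpha_{yz}$, and using the triangle inequality for the Hausdorff distance together with the $\e$-displacement of endpoints (converted between the metrics by the constant $C$, which is where the $2C^2\e$ term enters, since $g_0$-distances between $\e$-close points are controlled by $C\e$ and pushing the shadowing geodesics back costs another factor), one gets that the two geodesic images are within roughly $5r_0+(2C^2+1)\e$ in Hausdorff distance. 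The constant is assembled from: $r_0$ for $c_v$ vs. its $g_0$-shadow, $r_0$ for $c_{yz}$ vs. $\alpha_{yz}$, a few more $r_0$'s for aligning the two $g_0$-geodesics whose endpoints are close, and the $\e$-terms for the endpoint displacements.

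The main obstacle, and the step requiring the most care, is upgrading a Hausdorff-distance bound between the \emph{images} $c_v(\R)$ and $c_{yz}(\R)$ to the \emph{pointwise, time-synchronized} bound $d(c_v(t),c_{yz}(t))\le\beta$ for all $t\in[0,r-1]$ that the metric $d_1$ (and $(r-1,\beta)$-spanning) demands — note $d_1(\phi^t v,\phi^t w)=\max_{s\in[t,t+1]}d(c_v(s),c_w(s))$, so one actually needs the estimate on $[0,r]$. Two geodesics with Hausdorff-close images can a priori be badly out of phase, but here both are unit-speed $g$-geodesics and their basepoints $c_v(0)$, $c_{yz}(0)$ are close (within $r_0+\e$-ish, after noting $c_{yz}(0)$ is near $y$ which is near $c_v(0)$); a standard argument shows that two unit-speed minimizing geodesics in $\tilde M$ with nearby basepoints and Hausdorff-close images must have nearby parametrizations for all times, since the nearest-point projection onto a minimizing geodesic in a space of hyperbolic type is monotone along the other geodesic and cannot accumulate phase drift. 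Carrying this out — and bookkeeping all the $r_0$ and $\e$ contributions so that they sum to exactly $5r_0+(2C^2+1)\e$ — is the technical heart of the proof; the rest is the triangle inequality and the equivalence of $d$ and $d_{g_0}$.
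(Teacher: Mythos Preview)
Your plan is essentially the paper's proof: pick $y,z$ from the $\e$-nets, pass through the two $g_0$-geodesics $\alpha$ (joining $c_v(0),c_v(r)$) and $\alpha_{yz}$, apply the Morse lemma twice, then upgrade the Hausdorff bound to a synchronized one. Two points where your sketch diverges from the paper's execution are worth noting. First, your accounting of the constant is off: the extra $r_0$'s do \emph{not} come from ``aligning the two $g_0$-geodesics whose endpoints are close'' --- in negative curvature the convexity of $t\mapsto d_{g_0}(\alpha(t),\alpha_{yz}(t))$ gives $d_{g_0}(\alpha,\alpha_{yz})\le C\e$, hence $d_H(\alpha,\alpha_{yz})\le C^2\e$, so the Hausdorff bound is only $d_H(c_v[0,r],c_{yz}[0,r'])\le 2r_0+C^2\e$. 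The remaining $3r_0+(C^2+1)\e$ arises entirely in the phase-alignment step. Second, for that step you invoke monotonicity of nearest-point projection, which is heavier than needed; the paper just uses the triangle inequality and minimality: given $t'$ with $d(c_v(t),c_{yz}(t'))\le 2r_0+C^2\e$ and $d(c_v(0),c_{yz}(0))\le r_0+\e$, the identities $t=d(c_v(0),c_v(t))$, $t'=d(c_{yz}(0),c_{yz}(t'))$ plus two triangle inequalities give $|t-t'|\le 3r_0+(C^2+1)\e$, and then $d(c_v(t),c_{yz}(t))\le d(c_v(t),c_{yz}(t'))+|t-t'|\le 5r_0+(2C^2+1)\e$.
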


\begin{proof}[Proof of \ref{P_r spanning}]
Let $c:\R \to \tilde M$ be a minimizing $g$-geodesic with $c(0) \in K$. Then $c(r)\in K_r$ and we can choose $y\in K^\e,z\in K_r^\e$ with
$$
d(y,c(0)) \leq \e ,\quad d(z,c(r)) \leq \e.
$$
Let $\al$ be the $g_0$-geodesic connecting $c(0)$ and $c(r)$ parametrized such that $\al(0) = c(0) $ and $\al(d_{g_0}(y,z)) =  c(r)$. Using the convexity of the function $t \mapsto  d_{g_0}(\alpha(t), \alpha_{yz}(t))$ due to negative curvature we find
$$
d_{g_0}(\alpha (t) ,\alpha_{yz}(t)) \leq \max\{ d_{g_0}(c(0),y), d_{g_0}(c(r),z) \} \leq C\e \quad \forall ~ t \in [0, d_{g_0}(y,z)]. 
$$
Let $A=c[0,r]$ and $B =c_{yz}[0, r'] $ be the subsegment of $c_{yz}$ lying $r_0$-close to $\al_{yz}[0, d_{g_0}(y,z)]$ w.r.t. the $g$-Hausdorff metric $d_H$. Using the Morse lemma we find (omitting for the moment the intervals $[0, d_{g_0}(y,z)]$ for $\al,\al_{yz}$)
$$
d_H(A,B)\leq d_H(A,\al)+d_H(\al,\al_{yz})+d_H(\al_{yz}, B) \leq 2r_0+C^2\e . 
$$
By definition of the Hausdorff distance, for $t\in[0,r]$ there is some $t'\in \R$ with $d(c(t),c_{yz}(t'))\leq 2r_0+C^2\e$. 
Using the minimality of $c,c_{yz}$ we find with $d(c(0),c_{yz}(0))\leq r_0+\e$ that $|t-t'| \leq 3r_0 + (C^2+1)\e$ and hence
\[ d(c(t),c_{yz}(t)) \leq d(c(t),c_{yz}(t')) + d(c_{yz}(t'),c_{yz}(t)) \leq 2r_0+C^2\e + 3r_0 + (C^2+1)\e. \]
Therefore, taking $\beta:=5r_0+(2C^2+1)\e$ we obtain
$$
d_1(\dot{c}(t), \dot {c}_{yz}(t)) = \max \limits _{s\in [0,1]} d(c(t+s),c_{yz}(t+s)) \leq  \beta  \quad \forall t\in [0, r-1].
$$
\end{proof}

We can now prove the theorem.

\begin{proof}[Proof of \ref{upper bound beta}]
We have
\begin{align*}
& \# K_r^\e \leq C_\e \cdot \vol  B\left(x, r+a+\e/2\right) , \quad C_\e := \left( \inf_{y\in M}\vol B(y,\e/2) \right)^{-1}, \\
\Rightarrow \quad & \# P_r \leq \# K^\e \cdot \# K_r^\e \le \# K^\e \cdot C_\e \cdot  \vol  B\left(x, r+a+\e/2\right) .
\end{align*}
Hence
\begin{align*}
& \h(\phi^t , {\cal F}, \beta) \leq \varlimsup_{r\to \infty }\frac{1}{r-1}\log \# P_r \leq \varlimsup_{r\to \infty }\frac{1}{r-1}\log  \vol  B \left(x, r+a+ \e/2 \right) \\
&= \lim_{r \rightarrow \infty }\frac{ r+a+\e/2} {r-1}\frac{1}{r+a+\e/2}\ \log  \vol  B\left(x, r+a+\e/2 \right )  = h(g) .
\end{align*}
\end{proof}

\section{The two-dimensional case}

We use the notation introduced at the beginning of section \ref{upbound}. Morse \cite{morse} studied the structure of minimal geodesics in the universal cover $\tilde M$ (called ''class A geodesics'' there), where $M=\tilde M/\Gamma$ is a closed orientable surface of genus $\geq 2$. Apart from the Morse lemma in section \ref{upbound}, which is valid in any dimension, the assumption $\dim M=2$ provides additional information since in $\tilde M$ the minimizing geodesics intersect quite easily. As a background metric for $M$ we can choose by the uniformisation theorem a metric of constant negative curvature $-1$ and we use for $\tilde M$ the Poincar\'e model given by
\[ \tilde M=\{ z\in\C: |z|<1 \}, \quad (g_0)_z=\frac{4}{(1-|z|^2)^2}\skp_{euc}. \]
This model has a simple boundary at infinity, namely $\tilde M(\infty)=S^1$. Using the Morse lemma, for pairs $\xi_-,\xi_+\in S^1$ with $\xi_-\neq \xi_+$ we distinguish the minimal $g$-geodesics lying in bounded distance from the $g_0$-geodesic in $\tilde M$ joining $\xi_-,\xi_+$. Write
\begin{align*}
& c(\pm\infty):=\lim_{t\to\pm\infty}c(t) = \xi_\pm  \qquad \text{(the limit in the euclidean sense in $\C$)} \\
& B := \{ \xi= (\xi_-,\xi_+): \xi_-, \xi_+ \in S^1, \xi_-\neq \xi_+ \} = \tilde M(\infty)\times\tilde M(\infty)-\diag, \\
& \tilde \M_\xi := \{ \dot c(0) \mid \text{$c:\R\to \tilde M$ is an arc-length $g$-minimal with $c(\pm\infty)=\xi_\pm$} \} ,
\end{align*}
then $\tilde \M = \cup_{\xi \in B} \tilde\M_\xi$ and each class $\tilde\M_\xi$ is non-empty. In the sequel \emph{minimal} refers to $g$-minimizing arc-length geodesics $c:\R\to \tilde M$.

\subsection{Structure of the minimals}

\begin{definition}\label{M^pm}
For $v\in \tilde \M$ let $\tilde M^+(v),\tilde M^-(v)$ be the open connected components (half discs) of $\tilde M-c_v(\R)$, where $\tilde M^+(v)$ contains $\pi v + t \cdot i v$ for small $t>0$. For $\xi \in B$ set
\begin{align*}
\tilde\M_\xi^+ & := \left \{v \in \tilde\M_\xi ~\big|~  \forall w\in\tilde\M_\xi : \quad  \pi w \in c_v(\R) \quad\Rightarrow\quad c_w[0,\infty) \subset \overline{\tilde M^-(v)} ~ \right \}, \\
\tilde\M_\xi^- & := \left \{v \in \tilde\M_\xi ~\big|~  \forall w\in\tilde\M_\xi : \quad  \pi w \in c_v(\R) \quad\Rightarrow\quad c_w[0,\infty) \subset \overline{\tilde M^+(v)} ~ \right \}, \\
\tilde\M_\xi^0 & := \tilde\M_\xi^+\cup \tilde\M_\xi^- , \quad \tilde \M^0 := \bigcup_{\xi\in B} \tilde\M_\xi^0 \subset S\tilde M, \quad \M^0 := Dp(\tilde\M^0) \subset SM.
\end{align*}
\end{definition}

\begin{remark}\label{remark M^pm} \begin{itemize}
\item[(i)] The sets $\tilde\M_\xi^\pm$ are never empty. In fact, the intersection $\tilde\M_\xi^- \cap \tilde\M_\xi^+$ contains the velocity vectors of the bounding geodesics of $\tilde\M_\xi$ (cf. theorem 8 in \cite{morse}).

\item[(ii)] It is easy to see that no two geodesics from $\tilde\M_\xi^+$ (resp. $\tilde\M_\xi^-$) intersect transversely. We shall refer to this as the graph property of $\tilde\M^\pm_\xi$.

\item[(iii)] The sets $\tilde\M_\xi^\pm$ and hence $\tilde\M^0$ and $\M^0$ are closed and $\phi^t$-invariant.
\end{itemize}\end{remark}

By (ii) in \ref{remark M^pm} the sets $\tilde\M^0_\xi$ have a simple structure in $\tilde M$, so when calculating $\h(\phi_\M^t)$ we would like to stick to $\M^0$. For this it is important that $\M^0$ is ''sufficiently large''. Let $\Om\subset SM$ denote the non-wandering set of $\phi^t$ restricted to $\M$. The following proposition is the key observation to obtain $\h(\phi^t_\M)=h(g)$ in the two-dimensional case.

\begin{prop}\label{M^0 recurrent}
$\M^0\subset SM$ contains the non-wandering set $\Om$ of $\phi_\M^t$.
\end{prop}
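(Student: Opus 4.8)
The plan is to show that every non-wandering vector $v\in\Om\subset\M$ lies in $\M^0$, i.e.\ that the geodesic $c_v$ with endpoints $\xi=(\xi_-,\xi_+)\in B$ either bounds its class $\tilde\M_\xi$ from one side or the other. First I would argue by contradiction: suppose $v\in\Om$ but $v\notin\M^0$, so no lift $\tilde v\in\tilde\M_\xi$ of $v$ lies in $\tilde\M_\xi^+\cup\tilde\M_\xi^-$. By definition of $\tilde\M_\xi^\pm$ this means there is a minimal $w_+\in\tilde\M_\xi$ with $\pi w_+\in c_{\tilde v}(\R)$ and $c_{w_+}[0,\infty)$ entering $\tilde M^+(\tilde v)$, and similarly a minimal $w_-$ forcing a piece of another minimal in the class into $\tilde M^-(\tilde v)$. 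Using the graph property (Remark~\ref{remark M^pm}(ii)) the geodesics in $\tilde\M_\xi$ foliate a strip between the two ``bounding'' geodesics, and $c_{\tilde v}$ sits strictly in the interior of this strip; crucially there is then a geodesic $w$ in $\tilde\M_\xi$ crossing $c_{\tilde v}$ (otherwise $c_{\tilde v}$ would coincide with a bounding geodesic), and this crossing is a genuine transverse intersection of two distinct minimals with the same endpoints at infinity.

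The heart of the argument is to turn this transverse self-intersection-type configuration into an obstruction to recurrence. The idea is: a transverse crossing at a point $q=\pi w=c_{\tilde v}(t_0)$ between two minimals in $\tilde\M_\xi$ is a stable feature — by the Morse lemma both geodesics stay within $r_0$ (w.r.t.\ $d_{g_0}$, hence within $Cr_0$ w.r.t.\ $d$) of the fixed $g_0$-geodesic $\gamma_\xi$ joining $\xi_-$ to $\xi_+$, so they cannot separate and re-cross: the region enclosed between them after $q$ is a bounded lens. I would then show that if $v$ were non-wandering, one could find group elements $g_n\in\Gamma$ and times with $g_n\cdot$ (a long piece of $c_{\tilde v}$) converging back near $c_{\tilde v}$ in $S\tilde M$; transporting the crossing geodesic $w$ along, one produces \emph{infinitely many} distinct minimals in a single class $\tilde\M_{\xi'}$ (for $\xi'$ near $\xi$, or after passing to a limit exactly $\xi$) that pairwise bound lenses — but the total class $\tilde\M_{\xi'}$ lives in an $r_0$-neighborhood of one $g_0$-geodesic, which is a strip of finite width, and a strip of finite width cannot contain infinitely many pairwise-transversally-crossing minimals (a minimal is an embedded line dividing the strip, and once two of them cross they are ordered on each side, contradicting the graph property of the limit class). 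This yields the contradiction.

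An alternative and perhaps cleaner route, which I would pursue in parallel, is to avoid the dynamical limiting argument and instead use directly that $\Om$ is the non-wandering set: if $v\in\Om$, then the closure of the $\phi^t_\M$-orbit of $v$ meets itself in a recurrent fashion, and in particular one can find a minimal geodesic $\bar c$ (a limit of $\phi^{t_n}c_v$, lifted appropriately) whose class $\tilde\M_{\bar\xi}$ contains two transversally crossing members — but then \emph{some} element of that crossing pair must bound, because Morse's analysis (theorem~8 in \cite{morse}) shows the extreme geodesics of any class $\tilde\M_{\bar\xi}$ do not cross anything in the class transversely; propagating this back via the flow-invariance of $\tilde\M^0$ (Remark~\ref{remark M^pm}(iii)) and the fact that the bounding property is preserved under taking limits of endpoints, one concludes $v\in\M^0$. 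Either way, the structural input is the graph property plus the Morse-lemma confinement of each class to a strip of bounded width.

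I expect the main obstacle to be the topological bookkeeping in $\tilde M$: carefully formalizing ``$c_{\tilde v}$ lies strictly between the two bounding geodesics of its class'' and ``a transverse crossing forces a bounded lens that obstructs recurrence'', and making sure the limiting procedure (passing from $v$ to a nearby recurrent model with $\xi_n\to\xi$) preserves both minimality and the crossing while not collapsing the two geodesics into one. The interplay between the two metrics $d$ and $d_{g_0}$ (the crossing is geometric and metric-independent, but the width bound and the ``no reflux'' argument use $g_0$-convexity) must also be handled cleanly, and one should check that $\M^0$ being closed (Remark~\ref{remark M^pm}(iii)) is genuinely used to pass the conclusion from a dense recurrent set to all of $\Om$.
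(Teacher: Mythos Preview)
Your setup is right --- assume a lift $\tilde v\in\tilde\M_\xi$ is not in $\tilde\M_\xi^0$, so there exist $c^+,c^-\in\tilde\M_\xi$ crossing $c_{\tilde v}$ transversely into $\tilde M^+(\tilde v),\tilde M^-(\tilde v)$ respectively --- but the contradiction you propose does not close. Your first route asserts that ``a strip of finite width cannot contain infinitely many pairwise-transversally-crossing minimals'' and cites the graph property; however the graph property holds only for $\tilde\M_\xi^\pm$, not for the full class $\tilde\M_\xi$, and nothing a priori forbids an infinite family in $\tilde\M_\xi$ all meeting pairwise once. The ``transport the crossing along $\gamma_n$'' step is also underspecified: non-wandering gives you \emph{nearby} vectors $v_n,w_n\to\tilde v$ with $w_n=D\gamma_n\phi^{t_n}v_n$, not returns of $\tilde v$ itself, and there is no reason $\gamma_n c^\pm$ should land in the same class. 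Your second route conflates non-wandering with recurrent and then argues that because bounding geodesics exist, something propagates back to $\tilde v$; but the existence of bounding geodesics in a class says nothing about whether a \emph{particular} interior geodesic like $c_{\tilde v}$ is in $\tilde\M_\xi^0$.

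The mechanism in the paper is different and more concrete. From non-wandering one extracts $v_n,w_n\in U_n$ as above and makes a dichotomy on the endpoint $c_{v_n}(\infty)$. If $c_{v_n}(\infty)=c_{w_n}(\infty)=\xi_+$ for some $n$, then $\gamma_n$ fixes $\xi_+$, so $\xi_+$ is the forward endpoint of a periodic minimal; Morse's structure theorems for geodesics asymptotic to a periodic minimal (theorems 6, 10, 13 in \cite{morse}) then rule out the transverse crossings $c^\pm$. In the remaining case one arranges $v_n\to\tilde v$ with $c_{v_n}(\infty)\neq\xi_+$ lying, say, in the component of $S^1\setminus\{\xi_-,\xi_+\}$ on the $\tilde M^+(\tilde v)$ side. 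The point is that $c_{v_n}$ is \emph{not} in $\tilde\M_\xi$: its endpoints are perturbed. Since $\dot c_{v_n}(t^+)\to\dot c_{\tilde v}(t^+)$ and $c^+$ crosses $c_{\tilde v}$ transversely at $t^+$, the geodesic $c^+$ also crosses $c_{v_n}$ near $t^+$; but the boundary configuration of the four points $\xi_-,\xi_+,c_{v_n}(-\infty),c_{v_n}(\infty)$ on $S^1$ then forces a \emph{second} intersection of $c^+$ and $c_{v_n}$, contradicting minimality of both. The double-intersection argument with a perturbed-endpoint geodesic, together with the separate treatment of the periodic case via Morse's theorems, is the missing idea.
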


\begin{proof}
Let $v\in Dp^{-1}(\Om)\cap \tilde\M_\xi$ and $U_n=B(v,1/n)\cap \tilde\M\subset S\tilde M$ for $n\in\N$. By definition of $\Om$ there exists $\gamma_n\in\Gamma-\{\id\}$ and $t_n>0$ such that $ D\gamma_n\phi^{t_n} U_n \cap U_n \neq \emptyset$. In particular there is some $v_n \in U_n$ such that $w_n := D\gamma_n \phi^{t_n} v_n \in U_n$. Assume $v\notin\tilde\M_\xi^0$, so there are two minimals $c^\pm :\R \to \tilde M$ in $\tilde\M_\xi$ with $c^\pm(0)=c_v(t^\pm)$ and $c^\pm(0,\infty)\subset \tilde M^\pm(v)$.

First suppose $c_{v_n}(\infty)=c_{w_n}(\infty)=c_v(\infty)=\xi_+\in\tilde M(\infty)$ for some $n$. Then $\xi_+=c_{w_n}(\infty) =\gamma_n c_{v_n}(\infty)=\gamma_n \xi_+$, so $\xi_+$ is the point at $+\infty$ for some periodic minimal axis of $\gamma_n$. If $c_v$ is itself periodic, theorems 10 and 13 in \cite{morse} show that in fact there are no minimal geodesics in $\tilde \M_\xi$ intersecting $c_v$ transversely, i.e. $v\in \tilde\M_\xi^0$. If $c_v$ is not periodic, it is asymptotic to some periodic minimal $c_0$ in $+\infty$, approaching its limit from ''below'' (i.e. from $\tilde M^-(c_0(\R))$), say, by theorem 10 in \cite{morse}. Now $c^+$ is also asymptotic in $+\infty$ to that same minimal $c_0$ (theorem 13 in \cite{morse}). But two asymptotic minimals in $\tilde M$ cannot intersect transversely (theorem 6 in \cite{morse}), so we obtain a contradiction.

Assume now that $c_{v_n}(\infty) \neq c_{w_n}(\infty)$ for all $n\in\N$. Interchanging $v_n, w_n$ and maybe taking a subsequence, we may assume that $v_n\to v$ and $c_{v_n}(\infty)\neq \xi_+$ for all $n$. Moreover we can assume that the $c_{v_n}(\infty)$ lie in one connected component of $\tilde M(\infty)-\{ \xi_-,\xi_+ \}$, say  $c_{v_n}(\infty) \in \tilde M(\infty) \cap \overline{\tilde M^+(v)}$. Now, by $\dot c_{v_n}(t^+)\to \dot c_v(t^+)$ and the assumptions on the points at infinity of $c_{v_n},c^+$, there have to be two intersections of $c_{v_n},c^+$ for large $n$, contradicting the minimality of both geodesics. 
\end{proof}

\subsection{Entropy in strips of finite width}

In this section we will show that the local entropy of the geodesic flow in the non-wandering set $\Om\subset \M^0$ of $\phi^t_\M$ is vanishing. We work in the universal cover and write $\tilde \Om := Dp^{-1}(\Om)\subset S\tilde M$ for the lifted non-wandering set of $\phi^t_\M$. Recall
\begin{align*}
& d_1(v,w)=\max_{t\in[0,1]}d(c_v(t),c_w(t)) \quad v,w\in S\tilde M , \\
& Z_\beta(v) = \{ w \in \tilde \Om : d(c_v(t),c_w(t)) \leq \beta ~ \forall t\in\R \} \subset S\tilde M, \quad v\in\tilde \Om.
\end{align*}

\vspace*{20pt}

\begin{prop}\label{entropy-exp}
For any $v_0 \in \tilde \Om$ and any $\beta>0$ we have
\[ \h(\phi^t_{\tilde\Om}, Z_\beta(v_0))=0. \]
Hence the geodesic flow restricted to $\tilde \Om$ is $\beta$-entropy-expansive for any $\beta>0$.
\end{prop}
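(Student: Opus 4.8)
# Proof Proposal for Proposition \ref{entropy-exp}

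\textbf{Overall strategy.} The plan is to show that $Z_\beta(v_0)$ is dynamically trivial: every minimal geodesic whose orbit stays $d$-bounded by $\beta$ from $c_{v_0}$ for all time is in fact forced, by the two-dimensional structure theory, to lie in a single class $\tilde\M_\xi$ (same endpoints at infinity as $c_{v_0}$), and moreover $Z_\beta(v_0)$ is a one-parameter-like family on which the flow acts almost isometrically. First I would observe that if $w \in Z_\beta(v_0)$ then $c_w$ stays within Hausdorff distance $\beta$ of $c_{v_0}(\R)$, so by the Morse lemma (Theorem \ref{theoremmorse}) both $c_w$ and $c_{v_0}$ shadow the \emph{same} $g_0$-geodesic, hence $c_w(\pm\infty) = c_{v_0}(\pm\infty) = \xi_\pm$, i.e. $Z_\beta(v_0) \subset \tilde\M_\xi$. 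Since we are in $\tilde\Om \subset \tilde\M^0$ by Proposition \ref{M^0 recurrent}, we in fact have $Z_\beta(v_0) \subset \tilde\M_\xi^0 = \tilde\M_\xi^+ \cup \tilde\M_\xi^-$, and by the graph property (Remark \ref{remark M^pm}(ii)) no two geodesics in $\tilde\M_\xi^+$ (resp.\ $\tilde\M_\xi^-$) cross transversely.

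\textbf{Key steps.} (1) Reduce to a single class $\tilde\M_\xi$ via the Morse lemma as above, and to $\tilde\M_\xi^0$ via recurrence. (2) Use the graph property to set up coordinates: the geodesics in $\tilde\M_\xi^+$ foliate (or at least laminate) a strip between the two bounding geodesics of $\tilde\M_\xi$, and can be linearly ordered by a transverse parameter; the orbit of $Z_\beta(v_0) \cap \tilde\M_\xi^+$ under $\phi^t$ moves coherently along this ordering. (3) Estimate $(n,\delta)$-spanning numbers: show $r_n(Z_\beta(v_0), \delta)$ grows subexponentially — indeed I expect it is bounded by a polynomial in $n$, or even by a constant depending only on $\delta,\beta$. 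The mechanism: two vectors $v,w \in Z_\beta(v_0)$ that are $d_1$-far cannot have their geodesics stay uniformly close, and because the geodesics cannot cross, ``$d_n$-separated'' forces a definite transverse separation that does not replicate under iteration. Concretely, one wants: if $v,w \in \tilde\M_\xi^+$ with $d_n(v,w) > \delta$ for the orbit metric, then either they were already $d_1$-separated by some $\delta' = \delta'(\delta,\beta) > 0$ at time $0$, or the separation develops linearly, not exponentially — so the number of such vectors needed to span is $O(n)$. Then $\tfrac1n \log r_n \to 0$. (4) Handle $\tilde\M_\xi^-$ symmetrically and take the union, which only doubles the count.

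\textbf{Main obstacle.} The delicate point is Step (3): converting the purely topological graph property into a quantitative entropy bound. The graph property says geodesics in $\tilde\M_\xi^+$ don't cross, but it does not immediately give a modulus of continuity or a product structure — the class $\tilde\M_\xi^+$ could in principle be quite wild transversally (a Cantor-like lamination rather than a foliation). The right framework is probably to parametrize $Z_\beta(v_0) \cap \tilde\M_\xi^+$ by the ``height'' at which each geodesic crosses a fixed geodesic transversal to $c_{v_0}$, note this parametrization is monotone (graph property) and that the flow $\phi^t$ conjugates these height functions at different transversals in a monotone, uniformly-continuous way, with expansion controlled by the fact that everything stays in a $\beta$-strip around a single $g_0$-geodesic (whose flow is the totally controlled constant-curvature model). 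Compactness of $\bar B(v_0,\cdot)$-type sets and the bounded strip should then yield a uniform bound $r_n(Z_\beta(v_0),\delta) \le C(\delta,\beta)$, independent of $n$, which is more than enough. I would also need to check measurability/closedness of $Z_\beta(v_0)$ (clear, since it is an intersection of closed conditions inside the closed set $\tilde\Om$) so that $\h(\phi^t_{\tilde\Om}, Z_\beta(v_0))$ is well-defined. Having established $\h(\phi^t_{\tilde\Om}, Z_\beta(v_0)) = 0$ for every $v_0$ and every $\beta$, the $\beta$-entropy-expansiveness of $\phi^t_{\tilde\Om}$ is immediate from the definition $\hloc(\phi^t_{\tilde\Om},\beta) = \sup_{v_0} \h(\phi^t_{\tilde\Om}, Z_\beta(v_0)) = 0$.
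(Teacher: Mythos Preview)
Your overall strategy is exactly the paper's: reduce $Z_\beta(v_0)$ to a single asymptotic class $\tilde\M_\xi$ via the Morse lemma, then to $\tilde\M_\xi^0=\tilde\M_\xi^+\cup\tilde\M_\xi^-$ via Proposition~\ref{M^0 recurrent}, and exploit the linear order coming from the graph property to bound spanning numbers subexponentially. The gap is precisely where you flag it: your Step (3) is not carried out, and the mechanism you propose (a transversal height coordinate plus uniform continuity of the induced holonomy, hoping for a bound $r_n\le C(\delta,\beta)$ independent of $n$) does not by itself close the argument. The paper does not prove a constant bound; it proves the weaker but sufficient linear bound $r_T=O(T)$, and the idea that produces it is missing from your sketch.

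The paper's mechanism is an \emph{area} argument combined with a time-shift lemma. First (the paper's Steps 1--2), one handles reparametrization: if $c_w[0,T]$ lies in the $\delta/3$-tube around $c_v(\R)$ with $v,w\in A:=Z_\beta(v_0)\cap\tilde\M_\xi^+$, then after replacing $v$ by one of the finitely many $\phi^{j\delta}v$ with $|j|\le 2(1+\beta/\delta)$ the two geodesics are pointwise $2\delta$-close on $[0,T]$. Second (the paper's Step 3), set $K_T=\{x:d(x,c_{v_0}[0,T])\le\beta\}$, order $\A=\{c_v(\R):v\in A\}$ by the graph property, and build a maximal chain $c_1<c_2<\cdots<c_{N(T)}$ so that at each step some point $p_n\in c_n\cap K_T$ has $d(p_n,c_{n+1}(\R))\ge\delta/3$. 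The open half-discs $D_n=\tilde M^+(c_n)\cap B(p_n,\delta/3)$ are then pairwise disjoint (each lies in the strip between $c_n$ and $c_{n+1}$), each has area bounded below by a constant depending only on $\delta$ and the geometry of $M$, and all sit in a $\delta/3$-neighborhood of $K_T$, whose area is $O(T)$. Hence $N(T)=O(T)$, and together with the time-shift lemma one gets a $(T-1,2\delta)$-spanning set for $A$ of cardinality $O(T)$, so $\h(\phi^t,A,2\delta)=0$ for every $\delta$. Your outline would become a proof once you insert this area-counting step (or an equivalent quantitative device) in place of the unproven uniform bound.
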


\begin{proof}
Fix $v_0\in\tilde \Om, \beta>0$ and some small $\delta >0$. By \ref{M^0 recurrent} we find $\xi\in B$ with $v_0\in\tilde \M_\xi^0$ and hence $Z_\beta(v_0) \subset \tilde\M_\xi^0$. We shall prove that $(T-1,2\delta)$-spanning sets $E$ of minimal cardinality for $Z_\beta(v_0)\cap \tilde\M_\xi^+$ have cardinality growing at most linearly in $T$. The same arguments work for $Z_\beta(v)\cap \tilde\M_\xi^-$ and hence give the proposition.

Write
\begin{align*}
A & := Z_\beta(v_0)\cap \tilde\M_\xi^+ , \\
K_T & :=\{x\in\tilde M: d(x,c_{v_0}[0,T])\leq\beta\} , \\
\TT(c_v,\delta) & := \{ x \in \tilde M : d(x,c_v(\R)) < \delta \} , \quad v\in A.
\end{align*}
The sets $A,K_T$ are compact.

\underline{Step 1.} For $\delta>0$ and $v,w\in A$ with $c_w[0,T]\subset \TT(c_v,\delta/3)$ there exists
 $s_0 \in \R$  such that
\[ d(c_v(t+s_0),c_w(t)) \leq \delta \quad \forall t\in[0,T]. \]

Proof. By assumption for any $t\in [0,T]$ there is some $s(t)\in \R$ with
\[d(c_v(s(t)),c_w(t))\leq \delta_0 := \delta/3. \]
Using the minimality of $c_v,c_w$ one finds
\[ s(t)-s(0) \leq 2\delta_0 +t , \quad t \leq 2\delta_0 + s(t)-s(0). \]
Hence with $s_0:= s(0)$ we have
\begin{align*}
 d(c_v(t+s_0),c_w(t))  & \leq  d(c_v(t+s(0)),c_v(s(t))) + d(c_v(s(t)),c_w(t)) \\
&  \leq |s(t)-s(0)-t|+\delta_0 \leq 3\delta_0 = \delta.
\end{align*}

\underline{Step 2.} Let $F(v,\delta) := \{ \phi^{j\delta}v : j\in\Z, |j| \leq 2(1+\beta/\delta) \}$ for $v\in A$. Then for $v,w\in A$ with $c_w[0,T]\subset \TT(c_v,\delta/3)$ there exists  $v_j =\phi^{j\delta}v\in F(v,\delta)$ such that
\[ d(c_{v_j}(t),c_w(t)) \leq 2\delta \quad \forall t\in[0,T]. \]

Proof. Let $s_0$ be as in step 1 and $j\in \Z, r\in[0,\delta)$ with $s_0 = j\delta + r$. Then
\begin{align*}
 d(c_v(t+j\delta) , c_w(t)) &  \leq d(c_v(t+j\delta) , c_v(t+s_0)) + d(c_v(t+s_0) , c_w(t)) \\
& \leq | t+j\delta - t-s_0 | + \delta \leq 2\delta. 
\end{align*}
By definition of $A$ we have $d(\pi v,\pi w)\leq 2\beta$ and hence again by step 1
\[ |s_0| = d(c_v(s_0),\pi v) \leq d(c_v(s_0),\pi w) + d(\pi w,\pi v) \leq \delta + 2\beta, \]
showing
\[ |j| \leq \frac{|s_0|+|r|}{\delta} \leq \frac{\delta + 2\beta+\delta}{\delta} = 2(1+\beta/\delta). \]

\underline{Step 3.}
\[ \h(\phi^t,A)=0 . \]

Proof. Consider the family of (oriented) unparametrised curves
\[ \A : = \{c_v(\R) \subset \tilde M : v\in A \}. \]
$\A$ is ordered by the graph property of $\tilde\M_\xi^+$ ($c<c'$ iff $c'\subset \tilde M^+(c)$) and we construct a sequence of geodesics $c_1< ... < c_n< ...$ in $\A$. 
By closedness of $A$ we find a $<$-smallest geodesic $c_1$ in $\A$. If $c_1,...,c_n$ are already chosen, take $c_{n+1}\in \A$ to be the $<$-smallest geodesic $c_{n+1}>c_n$, such that the compact segment 
$c_n\cap K_T$ is not entirely contained in the open tube $\TT(c_{n+1},\delta/3)$. By construction, there is some $p_n\in c_n\cap K_T$ with $d(p_n,c_{n+1}(\R))\geq \delta/3$, hence the upper open half disc
\[ D_n := \tilde M^+(c_n) \cap B(p_n,\delta/3) \]
lies in the open strip between $c_n<c_{n+1}$ (for $\delta$ small, s.th. $c_n$ does not return to $D_n$ by minimality). 
Moreover all half discs $D_n$ are contained in a $\delta/3$-neighborhood of $K_T$ and disjoint, since the $c_i$ are ordered. 
As the volume of $D_n$ is bounded from below by some constant $C(\delta)$ using standard comparison theorems and the compactness of $M$, 
and the volume of the $K_T$-neighborhood is finite, growing linearly with $T$, the above construction stops at some finite $N(T)$, again $N(T)$ growing at most linearly. 
On the other hand, by construction for any $c\in\A$ we find some $i\in\{1,...,N(T)\}$ such that $c\cap K_T \subset \TT(c_i,\delta/3)$. Choose the parameterization of the $c_i$ such that $v_i := \dot c_i(0) \in A$. Now by step 2 the set
\[ E(T,\delta) := \bigcup_{i=1}^{N(T)} F(v_i,\delta) \]
is $(T-1,2\delta)$-spanning for $A$ w.r.t. $d_1$ with cardinality
\[ \# E(T,\delta) = N(T) \cdot \# F(v_i , \delta) = N(T) \cdot ( 4(1+\beta/\delta) + 1). \]
Hence for any $\delta>0$ we have
\[ \h(\phi^t,A,2\delta) \leq \lim_{T\to\infty}\frac{\log \#E(T,\delta)}{T-1} = \lim_{T\to\infty}\frac{\log N(T)}{T-1} = 0 \]
and by letting $\delta\to 0$, the claim follows.
\end{proof}

We now have the following result.

\begin{theorem}
If $M$ is a closed orientable surface with genus $\geq 2$, then
\[ \h(\phi^t_\M) = h(g) \]
for any Riemannian metric $g$ on $M$.
\end{theorem}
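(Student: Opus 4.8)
The plan is to combine the lower bound from Theorem~\ref{katok hasselblatt} with the upper bound machinery from sections~\ref{bowen bounds} and~\ref{upbound}, using the surface-specific entropy-expansiveness established in Proposition~\ref{entropy-exp}. Since the inequality $\h(\phi^t_\M)\geq h(g)$ is already available in full generality, the entire task reduces to proving $\h(\phi^t_\M)\leq h(g)$.

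First I would reduce the computation of $\h(\phi^t_\M)$ to the non-wandering set. By the variational principle and standard facts (cf.\ \cite{W}), the topological entropy of a flow on a compact invariant set is carried by its non-wandering set, so $\h(\phi^t_\M)=\h(\phi^t_\M,\Om)=\h(\phi^t_{\Om})$, where $\Om\subset\M$ is the non-wandering set of $\phi^t_\M$ and is compact and $\phi^t$-invariant. Lifting to the universal cover, let $\tilde\Om=Dp^{-1}(\Om)\subset S\tilde M$. The point of passing to $\Om$ is that Proposition~\ref{M^0 recurrent} gives $\Om\subset\M^0$, hence $\tilde\Om\subset\tilde\M^0$, which is exactly the regime in which the graph property holds and Proposition~\ref{entropy-exp} applies.

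Next I would apply Theorem~\ref{bowen expansive} in the covering setting with $\tilde V=S\tilde M$ (or rather the invariant piece $\tilde\Om$), $\Gamma$ the deck group, $\tilde f=\phi^1_{\tilde\Om}$ and $f=\phi^1_{\Om}$. Choose a compact fundamental domain, i.e.\ a compact set $K\subset\tilde M$ with $p(K)=M$, and let $\mathcal F=\pi^{-1}(K)\cap\tilde\Om\subset SK\cap\tilde\M$. Fix the constant $\beta$ from Theorem~\ref{upper bound beta} (applied to this compact $K$). Then Theorem~\ref{bowen expansive} yields
\[ \h(\phi^t_\Om) \;\leq\; \h(\phi^t,\mathcal F,\beta) \;+\; \hloc(\phi^t_{\tilde\Om},\beta). \]
By Proposition~\ref{entropy-exp} the local entropy term vanishes: $\hloc(\phi^t_{\tilde\Om},\beta)=\sup_{v_0\in\tilde\Om}\h(\phi^t_{\tilde\Om},Z_\beta(v_0))=0$ for every $\beta>0$. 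And since $\mathcal F=\pi^{-1}(K)\cap\tilde\Om\subset \pi^{-1}(K)\cap\tilde\M$, monotonicity of $\h(\cdot,\cdot,\beta)$ in the set together with Theorem~\ref{upper bound beta} gives $\h(\phi^t,\mathcal F,\beta)\leq h_{top}(\phi^t,\pi^{-1}(K)\cap\tilde\M,\beta)\leq h(g)$. Combining, $\h(\phi^t_\M)=\h(\phi^t_\Om)\leq h(g)$, and with the reverse inequality from Theorem~\ref{katok hasselblatt} we conclude equality.

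The main obstacle is the very first reduction step: one must be careful that passing from $\M$ to its non-wandering set $\Om$ does not lose entropy, and that the discrete-vs-continuous normalization of $\h$ is handled consistently (the remark opening section~2 covers this). One also needs $\Om$ to be compact and $\phi^t$-invariant so that Proposition~\ref{M^0 recurrent}, Theorem~\ref{bowen expansive} and Theorem~\ref{upper bound beta} all genuinely apply on the nose — in particular that $\tilde\Om$ is closed, $\Gamma$- and $\phi^t$-invariant, so that the covering-space hypotheses of Theorem~\ref{bowen expansive} are met. Everything else is a bookkeeping assembly of results already proved above; the genuinely new surface-specific content — the graph property and the linear growth of spanning sets inside finite-width strips — has been absorbed into Proposition~\ref{entropy-exp}.
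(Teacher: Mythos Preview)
Your proposal is correct and follows essentially the same route as the paper's own proof: reduce to the non-wandering set $\Om$ (via the variational principle / full measure of $\Om$, cf.\ \cite{W}), apply Theorem~\ref{bowen expansive} on the cover to $\tilde\Om\cap SK$, kill the local-entropy term with Proposition~\ref{entropy-exp}, and bound the remaining $\beta$-scale entropy by $h(g)$ using Theorem~\ref{upper bound beta} and monotonicity in the set. The only cosmetic difference is the order in which you invoke the reduction to $\Om$ versus the Bowen-type estimate.
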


\begin{proof}
By \ref{katok hasselblatt} we have $\h(\phi^t_\M)\geq h(g)$. To show the reverse inequality take any compact set $K\subset \tilde M$ 
with $p(K)=M$. By \ref{upper bound beta} there is some $\beta>0$ with
\[ \h(\phi^t,\tilde \Om\cap SK, \beta) \leq \h(\phi^t,\tilde \M\cap SK, \beta) \leq h(g). \]
Using \ref{bowen expansive} and \ref{entropy-exp} we find
\begin{align*}
\h(\phi^t_\Om) & \leq \h(\phi^t,\tilde \Om \cap SK, \beta) + \hloc(\phi^t,\tilde \Om \cap SK, \beta) \\
& = \h(\phi^t,\tilde \Om \cap SK, \beta) \leq h(g).
\end{align*}
But since $\Om$ is has full measure w.r.t. any invariant probability on $\M$, we find (cf. 8.6.1 (ii) in \cite{W})
\[ \h(\phi_\M^t) = \h(\phi_{\Om}^t) \leq h(g). \]
\end{proof}

\begin{remark}\begin{itemize}
\item[(i)] The theorem is trivial for the 2-sphere $M=S^2$ and also holds in the case where $M$ is the 2-torus $M=T^2$. This can be shown using the same ideas presented in this paper and is implemented in \cite{glasmachers}. If $M$ is non-orientable, the theorem holds for the orientable double cover $\hat M$ of $M$ and $M,\hat M$ have the same universal cover. Hence:

For any closed surface $M$ with any Riemannian metric $g$ we have 
\[ \h(\phi^t_\M) = h(g) \]
as stated in the introduction.

\item[(ii)] 
 Theorem 4.5. also holds if one replaces the Riemann metric $g$ by a Finsler metric $F$ (not necessarily reversible).
The Morse lemma only requires that the two norms $F, F_0 = \sqrt{g_0}$ are equivalent.  The  volume entropy of $F$ can be defined by
$$
h(F) = \lim_{r \to \infty} \frac {\log \vol_{g_0} B(p,r)}{r} 
$$  where $ \vol_{g_0}$ is the $g_0$-volume (in  fact one could take the lift of an arbitrary Riemannian metric to compute the volume)
and  $B(p,r)$ is the ball defined by the endpoints of Finsler geodesics rays 
of length $\le r$ initiating form $p$.
The  result \ref{katok hasselblatt} of Katok and Hasselblatt holds also in this setting, just as the arguments for \ref{upper bound beta}. Arguing along these lines
yields Theorem 4.5 in the Finsler case.  This has implications for Tonelli Lagrangian systems, as on high enough energy levels the arising Euler-Lagrange flow is a reparametrisation of a Finsler geodesic flow by Maupertuis' principle.
\end{itemize}\end{remark}

\end{document}